\newcommand{\End}{\text{End}}
\newcommand{\cB}{\mathcal{B}}
\newcommand{\gsl}{\mathfrak{sl}}
\newcommand{\osp}{\mathfrak{osp}}
\newcommand{\s}{s}
\newcommand{\x}{x}
\newcommand{\AWB}[1][\s]{AW\!B_3^{#1}(q)}
\newcommand{\A}{\mathcal{A}_s}
\newcommand{\Z}{Z_\s}
\newcommand{\ZU}{\End_{\Usl}(V_s^{\otimes 3})}
\newcommand{\qi}{{q^{-1}}}
\newcommand{\Usl}{U_q(\gsl_2)}
\newcommand{\Ut}{U_q(\gsl_2)^{\otimes 3}}
\newcommand{\Vt}{V_s^{\otimes 3}}
\theoremstyle{plain}
\newtheorem{thm}{Theorem}[section]
\newtheorem{prop}[thm]{Proposition}
\newtheorem{coro}[thm]{Corollary}
\newtheorem{defi}[thm]{Definition}
\newtheorem{rem}[thm]{Remark}
\numberwithin{equation}{section}
\def\lw{0.7pt} 
\def\xyd{0.8cm} 
\newcommand{\bul}{
	circle(0.06cm)
} 
\newcommand{\spath}{
	(\xyd,0)  \bul
} 
\newcommand{\upath}{
	(\xyd,\xyd) \bul
} 
\newcommand{\dpath}{
	(\xyd,-\xyd) \bul
} 
\title{\bf Askey--Wilson braid algebra and centralizer of $U_q(\mathfrak{sl}_2)$}
\renewcommand*{\Affilfont}{\normalsize\small}
\author[1]{Nicolas Cramp\'e}
\author[2]{Lo\"ic Poulain d'Andecy}
\author[3]{Luc Vinet}
\author[4]{Meri Zaimi\vspace{.5em}}
\affil[1]{Institut Denis-Poisson CNRS/UMR 7013 - Universit\'e de Tours - Universit\'e d'Orl\'eans, \newline\vspace{.9em}
Parc de Grandmont, 37200 Tours, France.}
\affil[2]{Laboratoire de math\'ematiques de Reims UMR 9008, Universit\'e de Reims Champagne-Ardenne, \newline\vspace{.9em} 
Moulin de la Housse BP 1039, 51100 Reims, France.}
\affil[2]{IRL-CRM, CNRS UMI 3457, Universit\'e de Montr\'eal. \vspace{.9em}}
\affil[3,4]{Centre de Recherches Math\'ematiques, Universit\'e de Montr\'eal, \newline\vspace{.9em}
P.O. Box 6128, Centre-ville Station, Montr\'eal (Qu\'ebec), H3C 3J7, Canada.}
\affil[3]{Insitut de valorisation des donn\'ees (IVADO), Montr\'eal (Qu\'ebec), H2S 3H1, Canada. \newline\vspace{.9em}}
	\renewcommand\AB@affilsepx{: \protect\Affilfont}
	\affil[ ]{E-mail addresses}
	\renewcommand\AB@affilsepx{, \protect\Affilfont}
	\affil[1]{crampe1977@gmail.com}
	\affil[2]{loic.poulain-dandecy@univ-reims.fr}
	\affil[3]{vinet@crm.umontreal.ca}
	\affil[4]{meri.zaimi@umontreal.ca}
\begin{document}
	
\date{\today} 
\maketitle

\begin{abstract}
\noindent A presentation of the centralizer of the three-fold tensor product of the spin $s$ representation of the quantum group $\Usl$ is provided. It is expressed as a quotient of the Askey--Wilson braid algebra. This newly defined algebra combines the Askey--Wilson relations with the braid group relations, on three strands, together with a characteristic equation of degree $2s+1$ for the braid generators. Explicit bases are given for the centralizer.
\end{abstract}

\maketitle

\vspace{3mm}

\section{Introduction}


Numerous quotients of the braid group algebra have been studied previously and occur in different contexts. For example, they naturally appear when considering the centralizer of tensor products of quantum group representations, and are used in the computation of the invariants for knot theory \cite{Jo85,Jo87,BW,Mur}. Note that explicit algebraic presentations for these centralizers are known only in a few cases. The most basic example is the Hecke algebra, which corresponds to a quotient of the braid group algebra realized by demanding that the generators satisfy a polynomial equation of degree $2$. The centralizer of $U_q(\gsl_n)$ in the tensor product of its fundamental representation is in general isomorphic to a quotient of the Hecke algebra \cite{Jimbo,Res}. For $n=2$, this centralizer is the Temperley--Lieb algebra \cite{TL}. 
A quotient of the braid group algebra by a polynomial equation of degree $3$ and additional relations, called the Birman--Murakami--Wenzl algebra \cite{BW,Mur}, has also been studied intensively. It is related to the centralizer of $\Usl$ in the spin $1$ representation \cite{LZ}. Generalizations of the Hecke algebra for any symmetric representation have been described recently in \cite{CP}.
Let us also mention that quotients of the braid group algebra by a polynomial equation of degree $k$ for the braid generators appear in the world of complex reflection groups \cite{BM,BMR,Cha,Mar}. 
In particular, the resulting algebra is finite-dimensional only when $\frac{1}{k}+\frac{1}{n}>\frac{1}{2}$ where $n$ is the number of strands. It turns out to be quite difficult to find presentations of finite-dimensional quotients of the braid group algebra, see \emph{e.g.}\ \cite{Mar2} for the quotients by a cubic characteristic relation for the generators.
The main result of this paper can be seen as a new family of explicit presentations for finite-dimensional quotients of the braid group algebra on three strands.
For each integer $k>1$, one member of this family has a characteristic equation of degree $k$, and the additional relations to make it finite-dimensional (and in fact, isomorphic to the centralizer) are obtained from the Askey--Wilson algebra.

A new point of view on the centralizers of $U(\gsl_2)$ or $\Usl$ in tensor products of any three possibly different spin representations has been brought up in \cite{CPV,CVZ2}.
Indeed, these centralizers are related to the Racah or Askey-Wilson algebras.
The Racah algebra has been introduced in \cite{GZ} in connection with the $6j$-symbols intervening in the coupling of three angular momenta.
The $q$-deformation of this approach has been initiated in \cite{Zh} where the Askey--Wilson algebra is introduced.
The Racah and Askey--Wilson algebras have subsequently found applications in different contexts such as algebraic combinatorics \cite{GWH,Ter}, Kauffman bracket skein algebras \cite{CL}
and symmetry of physical models \cite{KKM,Post,GVZ}.
Their representation theory has been studied in \cite{HB,Hua2,Hua}. A review on the Racah algebra and its generalizations as well as the connection with the universal diagonal centralizer of $U(\gsl_2)$ can be found in \cite{CGPV}, and a review of the Askey--Wilson algebra is given in \cite{CFGPRV}.

\medskip
The goal of this paper is to combine the braid relations with the Askey--Wilson relations in order to provide 
a description in terms of generators and relations of the centralizer of $\Usl$ in the tensor product of three identical spin $s$ representations. Our focus on three-fold tensor products (the three-strands case) is justified by the expectation that 
in general the $n$-fold centralizers will be defined by relations involving a limited number of strands (this is called locality of the presentation). In other words, we expect the centralizer for general $n$ to be fully described once a finite number of small values of $n$ are understood. The level $n=3$ is the first non-trivial one, where for example the braid relation appears. This level is enough to define the Temperley--Lieb, Hecke and Birman--Murakami--Wenzl algebras. In our case we expect to have new relations appearing at higher level, but nevertheless a good understanding of the case $n=3$ is necessary for the general case.

To achieve our goal, we start with the quotient of the three-strands braid group algebra by a characteristic polynomial equation of degree $2s+1$. This quotient is a generalization of the three-strands Hecke algebra, which is recovered when $s=1/2$. 
A new algebra, called Askey--Wilson braid algebra $\AWB$, is then defined by adding the Askey--Wilson relations. This algebra can both be seen as a quotient of the three-strands braid group algebra and as a quotient of the usual Askey--Wilson algebra.
The remaining of the paper consists in the study of the quotient of the Askey--Wilson braid algebra by two supplementary relations which are enough to prove that it is isomorphic to the centralizer of $\Usl$ in the tensor product of three spin $s$ representations. Let us remark that the connection between the braid group and the Askey--Wilson algebra has already been exploited to give a simpler interpretation of some generators of the Askey--Wilson algebra \cite{CGVZ} and to find representations of the braid relation in terms of the $q$-Racah polynomials \cite{CVZ3}. The latter point is related to spin Leonard pairs \cite{Curtin,NT}.

The plan of this paper is as follows. Section \ref{sec:defAWB} provides the notations and contains definitions of different quotients of the braid group algebra on three strands.
In particular the quotient $\AWB$, the main object of interest of this paper, is given in Definition \ref{def:AWB}. 
Then, the isomorphism between a quotient of $\AWB$ and the centralizer of $\Usl$ in the tensor product of three identical representations is stated in Theorem \ref{thm:iso} and the rest of the paper is mainly devoted to its proof.
The definition of the centralizer of interest in this paper is given in Section \ref{sec:cent}. 
Using previous results in the literature, the surjectivity is established.
Section \ref{sec:iso} focuses on the proof of the injectivity. Two explicit bases of the quotient of $\AWB$ are provided. The cases where $s=1/2$ and $s=1$ are specifically considered as examples, and the connections with the Temperley--Lieb and Birman--Murakami--Wenzl algebras are exhibited.
The paper concludes with some perspectives in Section \ref{eq:conc}. Technical proofs are gathered in appendix \ref{app}.

\section{The Askey--Wilson braid algebra and its special quotient \label{sec:defAWB}}

Let $\s \in \mathbb{Z}_{>0}/2$ be a strictly positive integer or half-integer and $q$ be an indeterminate. Throughout the paper, we will use the $q$-commutator of two elements:
$$[A,B]_q=qAB-q^{-1}BA,$$
and the following notations:
\begin{equation}
	q_p:=(-1)^pq^{p(p+1)} \quad \text{and} \quad \chi_p:=q^{2p+1}+q^{-2p-1} .\label{eq:qpetchip}
\end{equation}

\subsection{Preliminaries on Askey--Wilson algebras} 
We shall consider a specialization  of the Askey--Wilson algebra $\mathfrak{\textbf{aw}}(3)$ (see \cite{CFGPRV} for a review). 
It is generated by three generators $A$, $B$ and $C$ and demanding that the following three elements 
\begin{equation} 
	\frac{1}{q^2-q^{-2}} [A,B]_q+ C ,\qquad  \frac{1}{q^2-q^{-2}} [C,A]_q+ B, \qquad \frac{1}{q^2-q^{-2}} [B,C]_q+ A,  \label{eq:ABC}
\end{equation}
are equal to the same central element written as follows for later convenience  \[\frac{\chi_s(\kappa+\chi_s)}{q+\qi} .\] 
In this specialisation of $\mathfrak{\textbf{aw}}(3)$, the central element (computed in \cite{Zh}) reads:
\[ \Omega=\chi_s(\kappa+\chi_s)(q A+q^{-1}B+qC)-q^2 A^2-q^{-2}B^2 -q^2C^2 -q ABC . 
\]
A quotient of $\mathfrak{\textbf{aw}}(3)$ is obtained by fixing the values of this central element $\Omega$:
\[\Omega=\kappa(\kappa+\chi_s^3)+3\chi_s^2-(q+q^{-1}) . \label{eq:saw4}\]
This quotient, called special Askey--Wilson algebra, is important since it allows to provide a precise connection with the centralizers of $\Usl$ representations (for more details, see \emph{e.g.}\ \cite{CFGPRV}).

\subsection{A quotient of the braid group algebra}

We define a quotient of the braid group algebra on three strands by imposing a characteristic equation of order $2\s+1$ for the braid generators.

The algebra $B^\s_3(q)$ is generated by $\sigma_1$ and $\sigma_2$ subject to the following defining relations:
\begin{align}	
	& \sigma_1\sigma_2\sigma_1=\sigma_2\sigma_1\sigma_2, \label{eq:braid} \\
	& \prod_{p=0}^{2\s} (\sigma_i -q_p   ) =0 ,\quad\text{for } i=1,2. \label{eq:chareqsig}
\end{align}
In $B^\s_3(q)$, we define the idempotents, for $r=0,1,\dots,2\s$ and $i=1,2$,
\begin{equation}
	\displaystyle e_i^{(r)}:= \prod_{\genfrac{}{}{0pt}{}{p=0}{p\neq r} }^{2\s} \frac{\sigma_i-q_p}{q_r-q_p} \label{eq:eir}
\end{equation}
satisfying 
\begin{equation}
	\sum_{r=0}^{2\s} e_i^{(r)}=1 , \quad e_i^{(p)}e_i^{(r)}=\delta_{pr}e_i^{(r)} \quad \text{and} \quad e_i^{(r)} \sigma_i =\sigma_i e_i^{(r)}  = q_r e_i^{(r)}. \label{eq:epropr}
\end{equation}
The generators $\sigma_i$ can be written in terms of these idempotents as
\begin{equation}
	\sigma_i = \sum_{r=0}^{2\s} q_r e_i^{(r)} . \label{eq:sige}
\end{equation}

\subsection{The Askey--Wilson braid algebra}

We are ready to define an algebra combining the algebra $B^\s_3(q)$ and the Askey--Wilson relations. We define in $B^\s_3(q)$ the elements
\begin{equation}
	g_i :=\sum_{r=0}^{2\s} \chi_{r}e_i^{(r)} , \quad\text{for}\ i=1,2 , \label{eq:gi}
\end{equation}
which allows to give the following definition: 
\begin{defi}\label{def:AWB} Let $\s \in \mathbb{Z}_{>0}/2$.
	The Askey--Wilson braid algebra (on three strands) $\AWB$ is the quotient of the algebra $B^\s_3(q)$ obtained by asking that
	the triple $(g_1,g_2,\sigma_1 g_2 \sigma_1^{-1})$ satisfies the relations \eqref{eq:ABC} of the Askey--Wilson algebra, \textit{i.e.}\
	the three elements 
	\begin{equation}\label{AW-triple}
		\frac{[g_1,g_2]_q}{q^2-q^{-2}} + \sigma_1 g_2 \sigma_1^{-1} , \qquad \frac{[g_2,\sigma_1 g_2 \sigma_1^{-1}]_q}{q^2-q^{-2}} + g_1 , \qquad \frac{[\sigma_1 g_2 \sigma_1^{-1},g_1]_q}{q^2-q^{-2}}+ g_2 ,
	\end{equation}
	are equal to the same central element which will still be denoted by \[\frac{\chi_s(\kappa+\chi_s)}{q+\qi} .\] 
\end{defi}
Since the relations of the Askey--Wilson algebra are satisfied in $\AWB$, the element
\begin{equation}
	\Omega:=\chi_\s(\kappa+\chi_\s)(qg_1+q^{-1} g_2+q\sigma_1 g_2\sigma_1^{-1})-q^2 g_1^2-q^{-2} g_2^2 -q^2 \sigma_1 g_2^2 \sigma_1^{-1} -q g_1 g_2\sigma_1 g_2\sigma_1^{-1} \label{eq:Om}
\end{equation}
is also central in $\AWB$.
The main motivation for the previous definition is the connection with the centralizer of $\Usl$ given in the following theorem.
\begin{thm}\label{thm:iso}
	The quotient $\A$ of $\AWB$ by relations
	\begin{align}  
		& \Omega = \kappa (\kappa + \chi_\s^3) + 3 \chi_\s^2 - \chi_0^2 , \label{eq:AWB1b} \\
		& e_1^{(0)}g_2e_1^{(0)} = \frac{\chi_\s^2}{\chi_0}e_1^{(0)}, \label{eq:AWB2b}
	\end{align}
	is isomorphic to the centralizer of the diagonal action of $\Usl$ in three copies of its spin $s$ irreducible representation. 
\end{thm}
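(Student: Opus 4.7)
The strategy is the standard one for identifying a presented algebra with a centralizer: build a surjective homomorphism $\varphi\colon \A \to \ZU$ by mapping the abstract generators to natural operators, then bound $\dim \A$ above by $\dim \ZU$ via an explicit spanning set so that $\varphi$ must be injective.

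For the surjection, I would send $\sigma_i$ to the braided R-matrix action $\check{R}_{i,i+1}$ of $\Usl$ on strands $i,i+1$ of $\Vt$. The braid relation \eqref{eq:braid} is the Yang--Baxter equation, and \eqref{eq:chareqsig} holds because on $V_s\otimes V_s = \bigoplus_{p=0}^{2s} V_p$ the operator $\check{R}$ acts as the scalar $q_p=(-1)^pq^{p(p+1)}$ on the spin-$p$ component. Hence $e_i^{(r)}$ maps to the projector onto the spin-$r$ isotypic component of strands $i,i+1$, and $g_i$ maps to a shifted intermediate Casimir of the diagonal $\Usl$-action on those strands (eigenvalue $\chi_r$ on the spin-$r$ component). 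The Askey--Wilson relations \eqref{AW-triple} for the triple $(g_1,g_2,\sigma_1 g_2\sigma_1^{-1})$ are then exactly Zhedanov's original relations among intermediate Casimirs in a three-fold tensor product. The specialisation \eqref{eq:AWB1b} of $\Omega$ is computed from the known formula for the Casimir evaluated on three identical spin-$s$ summands, and \eqref{eq:AWB2b} reflects that on the image of $e_1^{(0)}$ (the projector onto the spin-$0$ component of strands $1,2$) the module identifies with $V_s$ on strand~$3$, so $g_2$ must act as the scalar $\chi_s$ times the identity of $V_s$, which after factoring out $\chi_0=q+q^{-1}$ coming from normalising $e_1^{(0)}$ gives the stated coefficient $\chi_s^2/\chi_0$. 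Surjectivity of $\varphi$ is then the classical fact that the $\Usl$-centralizer on $\Vt$ is generated by the braided R-matrices on adjacent strands (quantum Schur--Weyl duality), established in the references cited in Section \ref{sec:cent}.

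For injectivity I would exhibit a spanning set of $\A$ of cardinality at most $\dim \ZU = \sum_{j=0}^{3s} m_j^2$, where $m_j$ is the multiplicity of $V_j$ in $\Vt$, computed from the $\Usl$ Clebsch--Gordan rule. The idea is to use the quadratic Askey--Wilson relation to rewrite any product $g_2 g_1$ in terms of $g_1 g_2$, $g_1$, $g_2$ and the central element $\sigma_1 g_2\sigma_1^{-1}$, reducing arbitrary words in $g_1,g_2$ to a "straightened" normal form of shape $g_1^{a_1} g_2^{b_1} g_1^{a_2} g_2^{b_2}\cdots$. The characteristic relation \eqref{eq:chareqsig} bounds the exponents by $2s$, and the Casimir identity \eqref{eq:AWB1b} provides an extra polynomial relation between $g_1,g_2$ that cuts the a priori finite list down further. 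Then \eqref{eq:AWB2b} is the last ingredient needed to bring the count down to precisely $\dim \ZU$; passing back through the generators $\sigma_1,\sigma_2$ then yields the two explicit bases promised in Section \ref{sec:iso}.

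The main obstacle is the dimension bound, not the surjection. Verifying that the list of normal forms obtained from the Askey--Wilson straightening and the truncation actually has the correct cardinality requires delicate bookkeeping, especially because \eqref{eq:AWB2b} is a relation on a word involving $e_1^{(0)}$, which is itself a polynomial of degree $2s$ in $\sigma_1$; its effect on the abstract spanning set has to be carefully unpacked into a usable set of rewriting rules. Matching the resulting count with $\sum_j m_j^2$ (rather than some larger integer) is precisely what forces one to include \eqref{eq:AWB2b} and not some weaker relation, and this is where the bulk of the technical work in Section \ref{sec:iso} and the appendix will lie.
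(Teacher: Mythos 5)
Your overall architecture (a surjection onto the centralizer plus a spanning set of matching cardinality) is the same as the paper's, and your surjectivity half is essentially the content of Section \ref{sec:cent}. The gap lies in the injectivity half, and it is not merely ``delicate bookkeeping''. First, you refer to $\sigma_1 g_2\sigma_1^{-1}$ as ``the central element''; it is not central. The central element is $\kappa$ of \eqref{eq:kappa} (equivalently, the common value of the three expressions in \eqref{AW-triple}), and the normal form supplied by the Askey--Wilson PBW theorem is $g_1^{a}g_2^{b}(\sigma_1 g_2\sigma_1^{-1})^{c}\kappa^{p}$ with $0\le a,b,c\le 2s$ --- not an alternating word $g_1^{a_1}g_2^{b_1}g_1^{a_2}\cdots$, which even with bounded exponents is an infinite family and cannot be ``straightened'' by the $q$-commutation relation alone, since that relation introduces the third generator $\sigma_1 g_2\sigma_1^{-1}$ rather than eliminating an alternation. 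Second, and more seriously, the resulting spanning set is still infinite because the exponent $p$ of the central element $\kappa$ is a priori unbounded, and nothing in your proposal bounds it. This is the actual crux of the paper's argument: the set is first reduced to $\{e_1^{(a)}g_2^{|a-c|}e_1^{(c)}\kappa^{p}\}$ using tridiagonality (Proposition \ref{prop:AWB1}) and the ``bubble removal'' identities of Proposition \ref{prop:AWB2}, and one must then establish the annihilating polynomial \eqref{eq:polkappae} for $\kappa$ on each idempotent $e_1^{(a)}$ to cap $p$ at $n(a,c)$.

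Proving \eqref{eq:polkappae} is where the braid relation is used in the abstract algebra (via $\sigma_2\sigma_1 g_2=g_1\sigma_2\sigma_1$ sandwiched between two copies of $e_1^{(a)}$) and where relation \eqref{eq:AWB2b} genuinely enters: it yields $(\kappa-\chi_s)e_1^{(0)}=0$, the seed from which the cases $a\le s$ follow by threading paths down to level $0$. For $a>s$ one must additionally show that a certain polynomial $Q^{(a)}(\kappa)$ of degree at most $2s+1$ is not identically zero, which occupies all of Appendix \ref{app} (an evaluation at $\kappa=\chi_{s-a}$ followed by the limit $q\to e^{2\pi i/(4s+4)}$). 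As written, your proposal stalls at an infinite spanning set; to close the count against $\dim(\Z)$ you need to identify $\kappa$ correctly, derive its characteristic polynomial on each $e_1^{(a)}$, and prove that polynomial is minimal.
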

The proof of this theorem is given in Sections \ref{sec:cent} and \ref{sec:iso}.
One recognizes with (\ref{eq:AWB1b}) the known relation defining the special Askey--Wilson algebra, so it is not surprising to find it here. The quotient of $\AWB$ only by \eqref{eq:AWB1b}
could be called the special Askey--Wilson braid algebra.
\begin{rem}
	It is an open question whether the Askey--Wilson braid algebra $\AWB$ is finite-dimensional. What one can show is that, adapting slightly the proofs in Section \ref{sec:iso}, the algebra becomes finite-dimensional after adding the relation \eqref{eq:AWB1b}. Note that we need the final relation \eqref{eq:AWB2b} to prove the isomorphism with the centralizer but we conjecture that it is actually implied by the other relations. We were not able to prove this, but it seems to be true for low values of the spin $s$ ($s=1/2,1,3/2,2$), see Remark \refeq{rem:relkape0} for additional comments on this.
\end{rem}

Now a remarkable simplification occurs. 
Indeed it is easy to check that the three elements appearing in \eqref{AW-triple} are conjugated by $\sigma_1\sigma_2$ in $B^\s_3(q)$ using 
\begin{equation}\label{rel-si-g}
	[\sigma_i,g_i]=0,\ \qquad \sigma_1\sigma_2g_1=g_2\sigma_1\sigma_2,\ \qquad\sigma_2\sigma_1g_2=g_1\sigma_2\sigma_1.
\end{equation}
Therefore, the statement that the three elements \eqref{AW-triple} are central can be replaced equivalently by demanding that the element 
\begin{equation}
	\kappa:=\frac{1}{(q-\qi)\chi_s}[g_1,g_2]_q+\frac{q+\qi}{\chi_s}\sigma_1 g_2 \sigma_1^{-1} -\chi_s\ \ \ \in\cB^\s_3(q) \label{eq:kappa}
\end{equation}
is a central element. This allows to give an equivalent presentation of $\AWB$ but also of its quotient $\A$ as summarized in the following corollary:
\begin{coro}\label{cor:AWB} The algebra $\A$ is isomorphic to the quotient of $\cB^\s_3(q)$ by
	\begin{align}
		& \kappa \text{ is central,} \label{eq:AWB0}\\		
		& \Omega = \kappa (\kappa + \chi_\s^3) + 3 \chi_\s^2 - \chi_0^2 , \label{eq:AWB1} \\
		& e_1^{(0)}g_2e_1^{(0)} = \frac{\chi_\s^2}{\chi_0}e_1^{(0)}. \label{eq:AWB2}
	\end{align}
\end{coro}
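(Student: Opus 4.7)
The plan is to leverage the fact that conjugation by $w := \sigma_1\sigma_2$ cyclically permutes the three elements $g_1$, $g_2$, $\sigma_1 g_2 \sigma_1^{-1}$ of the Askey--Wilson triple, so that demanding centrality of a single expression in \eqref{AW-triple} is enough to force coincidence and centrality of all three.

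First, I would verify in $B_3^s(q)$ the three relations \eqref{rel-si-g}. The identity $[\sigma_i, g_i] = 0$ is immediate, since $g_i$ is a polynomial in $\sigma_i$ via the spectral decomposition \eqref{eq:gi}. For $\sigma_1\sigma_2\, g_1 = g_2\, \sigma_1\sigma_2$, the braid relation gives $\sigma_1\sigma_2\, \sigma_1^n\, \sigma_2^{-1}\sigma_1^{-1} = \sigma_2^n$ for all integers $n$; hence conjugation by $w$ carries any polynomial in $\sigma_1$ to the same polynomial in $\sigma_2$, and applied to $g_1$ this yields the claim. The third identity follows by the analogous argument with $\sigma_2\sigma_1$ in place of $\sigma_1\sigma_2$.

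Combining these with $[\sigma_2, g_2] = 0$, a short calculation shows that $w$-conjugation acts as $g_1 \mapsto g_2$, $g_2 \mapsto \sigma_1 g_2 \sigma_1^{-1}$, $\sigma_1 g_2 \sigma_1^{-1} \mapsto g_1$. Because the three expressions in \eqref{AW-triple}, call them $E_1, E_2, E_3$, are obtained from one another by this cyclic shift applied to their arguments, they are themselves cyclically permuted by $w$-conjugation. Consequently, if $E_1$ is central then $E_1 = w E_1 w^{-1} = E_2$ and, likewise, $E_1 = E_3$, so all three coincide with a common central element; the converse is tautological. Thus the defining condition of $\AWB$ is equivalent to asking only that $E_1$ be central.

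Finally, using $q^2 - q^{-2} = (q-q^{-1})(q+q^{-1})$, a direct rearrangement of \eqref{eq:kappa} gives $E_1 = \frac{\chi_s(\kappa + \chi_s)}{q+q^{-1}}$, so $\kappa$ and $E_1$ differ by an invertible affine transformation (as $\chi_s$ and $q+q^{-1}$ are units in the base ring); hence centrality of $E_1$ is equivalent to centrality of $\kappa$. This produces the presentation of $\AWB$ as the quotient of $B_3^s(q)$ by \eqref{eq:AWB0}, and the two remaining relations defining $\A$ inside $\AWB$, namely \eqref{eq:AWB1b}--\eqref{eq:AWB2b}, are literally \eqref{eq:AWB1}--\eqref{eq:AWB2}. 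The only step requiring any genuine computation is the verification of \eqref{rel-si-g}; once that is in place, the corollary is a formal reformulation, so no real obstacle arises.
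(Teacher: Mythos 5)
Your argument is correct and is essentially the paper's own: the paper likewise observes that the relations \eqref{rel-si-g} make $\sigma_1\sigma_2$-conjugation cyclically permute the three expressions in \eqref{AW-triple}, so that centrality of the single element $\kappa$ from \eqref{eq:kappa} is equivalent to the defining condition of $\AWB$, after which \eqref{eq:AWB1}--\eqref{eq:AWB2} are literally \eqref{eq:AWB1b}--\eqref{eq:AWB2b}. You have simply filled in the verification of \eqref{rel-si-g} and of the identity $E_1=\frac{\chi_s(\kappa+\chi_s)}{q+q^{-1}}$ that the paper leaves implicit.
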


Let us remark that the algebra $\AWB$ is also generated by $g_1$ and $g_2$ (instead of $\sigma_1$ and $\sigma_2$). Indeed, these elements satisfy
\begin{equation}
	e_i^{(r)} g_i =g_i e_i^{(r)}  = \chi_{r} e_i^{(r)} \label{eq:ge}
\end{equation}
and the characteristic equation
\begin{equation}
	\prod_{p=0}^{2\s} (g_i -\chi_{p}) =0 . \label{eq:chareqg}
\end{equation}
As a consequence, the idempotents $e_i^{(r)}$ can be expressed in terms of $g_i$ as follows
\begin{equation}
	e_i^{(r)}=\prod_{\genfrac{}{}{0pt}{}{p=0}{p\neq r}}^{2\s} \frac{g_i-\chi_p}{\chi_r-\chi_p} , \label{eq:eprodg}
\end{equation}
which shows that $g_1$ and $g_2$ generate $\AWB$. Therefore, $\AWB$ could equivalently be defined as a quotient of the Askey--Wilson algebra by the characteristic polynomial \eqref{eq:chareqg} 
and the braid relation \eqref{eq:braid}. 

\section{Centralizer of $\Usl$ \label{sec:cent}}

In this section, we give an explicit surjective homomorphism from the algebra $\A$ to the centralizer of $\Usl$ defined more precisely below. In other words, we show that some elements of the centralizer of $\Usl$ satisfy the relations of the Askey--Wilson braid algebra $\AWB$ but also of its quotient $\A$. We start by recalling well-known results about $\Usl$.

\subsection{Tensor products of representations of $\Usl$}

The quantum group $\Usl$ is a quasitriangular Hopf algebra that can be viewed as a $q$-deformation of the universal enveloping algebra of $\gsl_2$. Its definition by generators and relations can be found in \cite{CVZ3} and will not be reproduced here. Let us recall that the deformation parameter $q$ is taken to be generic in this paper.  

For each non-negative integer or half-integer $\s$ referred to as the spin, the algebra $\Usl$ has a finite irreducible representation $V_{\s}$ of dimension $2\s+1$. The tensor product of two such representations decomposes as follows
\begin{equation}
	V_s^{\otimes 2} \cong \bigoplus_{j=0}^{2s}V_j . \label{eq:decompV2}
\end{equation}
Similarly, the tensor product of three copies of $V_s$ decomposes as
\begin{equation}
	V_s^{\otimes 3}\cong \bigoplus_{j=0}^{2s}V_j\otimes V_s \cong \bigoplus_{j=j_\text{min}}^{3s} V_j^{\oplus d_j} ,
	 \label{eq:decompV3}
\end{equation}
where
\begin{equation}
	j_\text{min} = 
	\begin{cases}
		0 , & \text{if $s$ is integer}, \\
		\frac{1}{2} , & \text{if $s$ is half-integer},
	\end{cases} \label{eq:jmin}
\end{equation}
and the degeneracy $d_j$ of $V_j$ is given explicitly by
\begin{equation}
	d_j=\min(2s,s+j) - |s-j| + 1 = 
	\begin{cases}
		2j+1 ,  & \text{if } j_{\min} \leq j\leq s ,\\
		3s-j+1 , & \text{if } s<j\leq 3s .\\
	\end{cases} \label{eq:degen}
\end{equation}

\subsection{Centralizer}
If $X \in \Usl$, we will denote by $X_{\vert_{\Vt}}$ its representation in $\End(\Vt)$. The algebra of interest is the centralizer of $\Usl$ in $\End(\Vt)$ defined as
\begin{equation}
	\Z := \ZU = \{M \in \End(V_s^{\otimes 3}) \ | \ MX_{\vert_{\Vt}}=X_{\vert_{\Vt}}M \quad \forall X \in \Usl\}. \label{eq:centr}
\end{equation}  
Because of the first isomorphism in \eqref{eq:decompV3}, there is a vector space isomorphism
\begin{equation}
	\Z \cong \bigoplus_{a,b=0}^{2s}\text{Hom}_{\Usl}\bigl(V_a\otimes V_s \,, \, V_{b}\otimes V_s\bigr),
\end{equation}
where
\begin{align}
	&\text{Hom}_{\Usl}\bigl(V_a\otimes V_s\,,\,V_{b}\otimes V_s\bigr) \nonumber \\ 
	&=\{M \in \text{Hom}\bigl(V_a\otimes V_s\,,\,V_{b}\otimes V_s\bigr) \ | \ MX_{\vert_{V_a\otimes V_s}}=X_{\vert_{V_b\otimes V_s}}M \quad \forall X \in \Usl\}. \label{eq:centr2}
\end{align} 

An explicit formula for the dimension of the centralizer $\Z$ for any choice of spin $s$ can be obtained using the degeneracies $d_j$ given in \eqref{eq:degen}:
\begin{equation}
	\dim(\Z)=\sum_{j=j_\text{min}}^{3s}d_j^2 = \frac{1}{2}(2\s + 1)( (2\s+1)^2 +1) . \label{eq:dimcentr}
\end{equation} 
Another way to compute the dimension of the centralizer is to use the vector space direct sum decomposition \eqref{eq:centr2}. The dimension of the summand corresponding to $a,b$ in this direct sum is
\begin{equation}
	\min(a+b,2s) - |a-b| + 1 
	=\begin{cases} 
		2\min(a,b)+1, & \text{if $a+b \leq 2s$},\\
		2s-|a-b|+1, & \text{if $a+b>2s$}.
	\end{cases} \label{eq:dimcentr2}
\end{equation}
The dimension of the centralizer $\Z$ is then the sum of the previous dimensions over $a,b=0,1, \dots, 2\s$, which coincides with \eqref{eq:dimcentr}.

\subsection{Elements and relations for the centralizer}
There is a quadratic Casimir element in $\Usl$ that can be normalized (see \cite{CVZ3}) such that it is represented in $\End(V_s)$ by
\begin{equation}
	\chi_s \text{Id}_{V_s}, \label{eq:repCasimir}
\end{equation}
where $\chi_s$ is defined in \eqref{eq:qpetchip} and $\text{Id}_{V_s}$ denotes the identity acting on $V_s$. The normalization choice \eqref{eq:repCasimir} implies that in the direct sum basis \eqref{eq:decompV2}, the Casimir element of $\Usl$ is represented by the following direct sum matrix in $\End(V_s^{\otimes 2})$
\begin{equation}
	C  = \bigoplus_{j=0}^{2\s} \chi_j \text{Id}_{V_j}. \label{eq:repCasimir2}
\end{equation} 
Since the Casimir element is central in $\Usl$, its representation \eqref{eq:repCasimir2} belongs to the centralizer of $\Usl$ in $\End(V_s^{\otimes 2})$. 

There is another element $\check R$ in this centralizer that can be constructed using the universal $R$-matrix of $\Usl$. The matrix $\check R$ takes the following form in the decomposed basis \eqref{eq:decompV2}
\begin{equation}
	\check R = \alpha_s \bigoplus_{j=0}^{2\s} q_j \text{Id}_{V_j}, \label{eq:cR}
\end{equation}
where $q_j$ is defined in \eqref{eq:qpetchip} and $\alpha_s = (-1)^{2\s} q^{-2\s(\s+1)}$ is the normalization constant that appears when using the conventions of \cite{CVZ3} (a detailed proof of this result can be found therein). Equations \eqref{eq:repCasimir2} and \eqref{eq:cR} imply that we can define the following orthogonal idempotent matrices for $r=0,1,\dots,2\s$
\begin{equation}
	\displaystyle E^{(r)} := \prod_{\genfrac{}{}{0pt}{}{p=0}{p\neq r} }^{2\s} \frac{\alpha_s^{-1}\check R-q_p \text{Id}_{V_s^{\otimes 2}}}{q_r-q_p} 
	= \prod_{\genfrac{}{}{0pt}{}{p=0}{p\neq r} }^{2\s} \frac{C-\chi_p \text{Id}_{V_s^{\otimes 2}}}{\chi_r-\chi_p} \label{eq:Er}
\end{equation} 
which satisfy
\begin{equation}
	E^{(r)}\check R = \check R E^{(r)} = \alpha_sq_r E^{(r)}, \quad E^{(r)}C = CE^{(r)} = \chi_r E^{(r)}. \label{eq:proprEr}
\end{equation}
In other words, $E^{(r)}$ is the projector on the irreducible representation space $V_r$ in the direct sum decomposition \eqref{eq:decompV2}. We can write
\begin{equation}
	C = \sum_{r=0}^{2s} \chi_r E^{(r)}. \label{eq:CEr}
\end{equation}
 
Now we consider the three-fold tensor product $V_s^{\otimes 3}$. We denote $C_{12}:=C \otimes \text{Id}_{V_s}$ and $C_{23}:=\text{Id}_{V_s} \otimes C$. The matrices $\check R_{12},\check R_{23}$ and $E^{(r)}_{12},E^{(r)}_{23}$ are understood 
similarly using \eqref{eq:cR} and \eqref{eq:Er} respectively. It is then possible to define
\begin{equation}
	C_{13} := \check R_{12} C_{23} \check R_{12}^{-1}. \label{eq:C13} 
\end{equation}
We also denote by $C_{123}$ the matrix representing the action of the Casimir element of $\Usl$ on $V_s^{\otimes 3}$ which takes the following form in the fully decomposed basis of \eqref{eq:decompV3}:
\begin{equation}
	C_{123} = \bigoplus_{j=j_\text{min}}^{3\s} \chi_j \text{Id}_{V_j}^{\oplus d_j}. \label{eq:repCasimir3}
\end{equation} 

The triple of matrices $C_{12},C_{23},C_{13}$ are representations of the so-called intermediate Casimir elements of $\Ut$ in $\End(V_s^{\otimes 3})$.
With the normalization \eqref{eq:repCasimir}, they satisfy the relations of the special Askey--Wilson algebra as defined in Section \ref{sec:defAWB}, with the total Casimir element $C_{123}$ 
corresponding to the central element $\kappa$ \cite{Zh,CGVZ}. For example, one gets 
\begin{equation}
	\frac{[C_{12},C_{23}]_q}{q^2-q^{-2}} + \check R_{12} C_{23} \check R_{12}^{-1} =  \frac{\chi_s}{\chi_0} (C_{123}+\chi_s \text{Id}_{V_s^{\otimes 3}}). \label{eq:AWCasimirs}
\end{equation}
The matrices $C_{12},C_{23},C_{13},C_{123}$ all belong to the centralizer $\Z$, 
as can be deduced from the discussions above 
(for the element $C_{13}$, this fact is a direct consequence of the results of \cite{CGVZ}). In \cite{CVZ2}, it has been proven 
that they generate the whole centralizer $\Z$.
It is also apparent that the matrices $\check R_{12}$ and $\check R_{23}$ belong to $\Z$ and they generate the centralizer $\Z$ \cite{LZ2}. 
They are called braided $R$-matrices because they satisfy the braided Yang--Baxter equation
\begin{equation}
	\check R_{12}\check R_{23}\check R_{12} = \check R_{23} \check R_{12} \check R_{23}. \label{eq:braidR}
\end{equation}

The total Casimir element $C_{123}$ has the property
\begin{equation}
	E_{12}^{(0)}C_{123}E_{12}^{(0)} = C_{123}E_{12}^{(0)} = \chi_s E_{12}^{(0)}. \label{eq:C123E0}
\end{equation}
Indeed, recall that $E_{12}^{(0)}$ is the projector on the space $V_0 \otimes V_s$ of the first decomposition in \eqref{eq:decompV3}. 
The matrix $E_{12}^{(0)}C_{123}E_{12}^{(0)}$ hence acts as zero on all the summands of this decomposition except on $V_0 \otimes V_s \cong V_s$ 
where it acts as the constant $\chi_s$. This leads directly to equation \eqref{eq:C123E0}. 
Multiplying \eqref{eq:AWCasimirs} on the left and on the right by the idempotent $E_{12}^{(0)}$ and using the properties \eqref{eq:proprEr} together with \eqref{eq:C123E0}, it is straightforward to find
\begin{equation}
	E_{12}^{(0)}C_{23}E_{12}^{(0)} = \frac{\chi_\s^2}{\chi_0}E_{12}^{(0)}.
\end{equation}
The discussion above leads to the following proposition. 
\begin{prop}\label{prop:surj} The mapping given by
\begin{align}
	 &\sigma_1 \mapsto \alpha_s^{-1} \check R_{12},\qquad  \sigma_2 \mapsto \alpha_s^{-1} \check R_{23}, \label{eq:map_surj}
\end{align}
extends to a surjective algebra homomorphism from $\A$ to the centralizer $\Z$ such that
\begin{align}
	\ g_1 \mapsto C_{12}, \qquad g_2 \mapsto C_{23},  \qquad  \sigma_1 g_2 \sigma_1^{-1} \mapsto C_{13}, \qquad  \kappa \mapsto C_{123}.\label{eq:mapsurjg}
\end{align}
\end{prop}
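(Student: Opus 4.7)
The plan is to verify that each defining relation of $\A$ holds in $\Z$ under the proposed assignment, and then invoke the known generation of $\Z$ by the braided $R$-matrices to obtain surjectivity. I would extend the map in three stages: first to $B^\s_3(q)$, then to $\AWB$, and finally to the quotient $\A$.

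For the first stage, the braid relation \eqref{eq:braid} becomes the braided Yang--Baxter equation \eqref{eq:braidR}, with the factors of $\alpha_s^{-1}$ cancelling between the two cubic sides. The characteristic relation \eqref{eq:chareqsig} follows from \eqref{eq:cR}, which exhibits $\alpha_s^{-1}\check R$ as a direct sum of scalar blocks $q_j\,\text{Id}_{V_j}$ for $j=0,\dots,2\s$. Substituting $\sigma_i \mapsto \alpha_s^{-1}\check R_{i,i+1}$ into \eqref{eq:eir} and comparing with \eqref{eq:Er} then yields $e_i^{(r)} \mapsto E_{i,i+1}^{(r)}$, from which $g_i \mapsto C_{i,i+1}$ follows by \eqref{eq:gi} and \eqref{eq:CEr}, and $\sigma_1 g_2 \sigma_1^{-1} \mapsto C_{13}$ by the definition \eqref{eq:C13}.

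To extend to $\AWB$, I would use \eqref{eq:AWCasimirs} together with its two cyclic analogues (which one derives by conjugating with $\check R_{12}$ and $\check R_{23}$, using the $R$-matrix versions of the relations \eqref{rel-si-g}), showing that the image triple $(C_{12},C_{23},C_{13})$ satisfies \eqref{AW-triple} with common central value $\frac{\chi_\s}{\chi_0}(C_{123}+\chi_\s\,\text{Id})$. Matching this with the defining central element of Definition \ref{def:AWB} and using $\chi_0=q+q^{-1}$ forces the identification $\kappa \mapsto C_{123}$. For the final stage, relation \eqref{eq:AWB2b} is precisely the identity $E_{12}^{(0)}C_{23}E_{12}^{(0)}=\frac{\chi_\s^2}{\chi_0}E_{12}^{(0)}$ established immediately before the proposition, while \eqref{eq:AWB1b} is the value of the central element $\Omega$ on this specialized representation. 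Surjectivity then follows at once from the fact that $\check R_{12}$ and $\check R_{23}$ generate $\Z$ \cite{LZ2}.

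The main technical point will be the verification of \eqref{eq:AWB1b}. Although $\Omega$ is central in the Askey--Wilson algebra and so acts as a scalar $\omega_j$ on each irreducible summand $V_j$ of the decomposition \eqref{eq:decompV3}, one must check that $\omega_j$ coincides with the value of $\kappa(\kappa+\chi_\s^3)+3\chi_\s^2-\chi_0^2$ at $\kappa=\chi_j$. The most efficient route is to import this specialization from the computation of $\Omega$ in \cite{Zh} applied to three copies of the spin $s$ module; alternatively, one can evaluate both sides of \eqref{eq:Om} on a convenient highest-weight vector of each summand, for instance using \eqref{eq:C123E0} on the invariant subspace to pin down the constants.
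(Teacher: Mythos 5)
Your proposal is correct and follows essentially the same route as the paper: the paper's ``proof'' is exactly the discussion preceding the proposition (the eigenvalue form \eqref{eq:cR} of $\alpha_s^{-1}\check R$ for the braid and characteristic relations, the special Askey--Wilson relations for the intermediate Casimirs with $\kappa\mapsto C_{123}$ imported from \cite{Zh,CGVZ}, the derivation of $E_{12}^{(0)}C_{23}E_{12}^{(0)}=\frac{\chi_\s^2}{\chi_0}E_{12}^{(0)}$ from \eqref{eq:AWCasimirs} and \eqref{eq:C123E0}, and generation of $\Z$ by $\check R_{12},\check R_{23}$ from \cite{LZ2}). The one step you single out as the main technical point, the verification of \eqref{eq:AWB1b}, is handled in the paper precisely as you suggest, by citing the computation of $\Omega$ on the triple tensor product from \cite{Zh,CGVZ}.
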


\section{Isomorphism with the centralizer\label{sec:iso}}

This section is devoted to the proof of Theorem \ref{thm:iso} which is the main result of the paper.
Since, we have just proven that there exists a surjective homomorphism from $\A$ to $\Z$, it remains to show that $\dim(\A) \leq \dim(\Z)$ to prove that this homomorphism is an isomorphism. 
Our strategy will be to provide a generating family for $\A$ with cardinality equal to the dimension of $\Z$. This generating family will hence be a basis for the algebra $\A$.

\subsection{Paths\label{sec:path}} 

We start by introducing a formalism that will be useful in the following.

Let $N$ be a non-negative integer. A path $\gamma$ of length $N$ is a sequence of integers $(\gamma_i)_{i=0}^N$ with $\gamma_{i+1} \in \{\gamma_{i}-1,\gamma_i,\gamma_{i}+1\}$ for all $i=0,\dots,N-1$, and $0 \leq \gamma_i \leq 2s$ . The set of all possible paths $\gamma$ going from $a$ to $b$ with length $N$ will be denoted
\begin{equation}
	\Gamma^{(a,b)}_N := \{\gamma=(\gamma_i)_{i=0}^N \ | \ \gamma_0=a, \gamma_N=b \}.
\end{equation}
Note that $\Gamma^{(a,b)}_N$ is empty if $N< |a-b|$. The index $N$ will be dropped when referring to paths of all possible lengths, \textit{i.e.}
\begin{equation}
	\Gamma^{(a,b)} := \bigcup_{N\geq |a-b|} \Gamma^{(a,b)}_N.
\end{equation}

We will illustrate a path $\gamma=(\gamma_i)_{i=0}^N \in \Gamma^{(a,b)}_N$ by a diagram of $N+1$ connected dots, with the horizontal direction representing the values of the indices $i$ and the vertical direction representing the values $\gamma_i$. The boundary values $a$ and $b$ will be explicitly indicated at the beginning and at the end of the path. An example is given in Figure \ref{fig:egpath}.
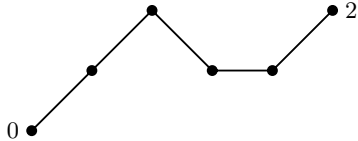
\begin{figure}[h]
	\begin{tikzpicture}[scale=1,line width=\lw]
		\node at (-0.25cm,0) {\footnotesize $0$};
		\draw[fill=black] (0,0) \bul -- ++\upath -- ++\upath -- ++\dpath -- ++\spath -- ++\upath;
		\node at (5*\xyd+0.25cm,2*\xyd) {\footnotesize $2$};
	\end{tikzpicture}
	\centering
	\caption{Diagrammatic representation of the path $\gamma=(0,1,2,1,1,2) \in \Gamma^{(0,2)}_5$.}
	\label{fig:egpath}
\end{figure}

Note that there is only one possible path of length $N=|a-b|$ going from $a$ to $b$: it is the strictly increasing path if $a<b$, the strictly decreasing path if $a>b$, or the zero length path if $a=b$. We will specifically denote it by $\overline{\gamma}(a,b)$. In the case where $a=b$, the path has zero length and it will be illustrated by a labeled dot $\begin{tikzpicture}[scale=1,line width=\lw,baseline={([yshift=-0.1cm]current bounding box.center)}]
	\draw[fill=black] (0,0) \bul;
	\node at (0.25cm,0) {\footnotesize $a$};
\end{tikzpicture}$.

To each path $\gamma$, we associate the following element of $\AWB$ (or of $\A$) 
\begin{equation}
	\x_\gamma= e_1^{(\gamma_0)} g_2 e_1^{(\gamma_1)} g_2 e_1^{(\gamma_2)} \dots e_1^{(\gamma_{N-1})} g_2 e_1^{(\gamma_N)}. \label{eq:Egam}
\end{equation}
If $\gamma = (\gamma_i)_{i=0}^M \in \Gamma_M^{(a,b)}$ and $\gamma'= (\gamma'_i)_{i=0}^N \in \Gamma_N^{(b,c)}$ are two paths, then their horizontal concatenation is defined by
\begin{equation}
	\gamma \cdot \gamma' = (\gamma_0,\gamma_1,\dots,\gamma_{M-1},\gamma'_0,\gamma'_1,\dots,\gamma'_N) \in \Gamma_{M+N}^{(a,c)} .
\end{equation}
In this case, we can write $\x_\gamma \x_{\gamma'} = \x_{\gamma \cdot \gamma'}$.
If $\gamma$ and $\gamma'$ are instead two paths with $\gamma_M \neq \gamma'_0$, then the orthogonality property of the idempotents $e_1^{(r)}$ implies that $\x_\gamma \x_{\gamma'} = 0$.

\subsection{A basis for $\A$}\label{sec:bas}  

From the PBW basis of the Askey--Wilson algebra (see \emph{e.g.}\ \cite{CFGPRV}), it is seen that a generating family for $\A$ is given by
\begin{equation}
	\{ g_1^a g_2^b (\sigma_2^{-1} g_1 \sigma_2)^c \kappa^p\ | \ a,b,c,p\in \mathbb{N} \}. \label{eq:genfam1}
\end{equation}
Due to the characteristic polynomials \eqref{eq:chareqg} for $g_1$ and $g_2$, this generating family can be reduced to
\begin{equation}
	\{ g_1^a g_2^b \sigma_2^{-1} g_1^c \sigma_2\kappa^p\ | \ 0\leq a,b,c \leq 2\s, \ p\in \mathbb{N} \}. \label{eq:genfam2} 
\end{equation}
The elements $g_1^a$, $g_2^b\sigma_2^{-1}$ and $g_1^c$ can be written respectively in terms of linear combinations of $e_1^{(a)}$, $g_2^b$ and $e_1^{(c)}$.  
Using moreover the fact that $\sigma_2$ is invertible and $\kappa$ central, one deduces that the set 
\begin{equation}
	\{ e_1^{(a)} g_2^b e_1^{(c)} \kappa^p\ | \ 0\leq a,b,c \leq 2\s, \ p\in \mathbb{N} \} \label{eq:genfam3}
\end{equation}
is also a generating family. To reduce this set further, we need some additional properties of the quotient $\A$ of the Askey--Wilson braid algebra.  

\begin{prop}\label{prop:AWB1}
	For any integers $0\leq a,c \leq 2s$ and $b\geq 0$,
	\begin{equation}	
		e_1^{(a)}  g_2^b e_1^{(c)} = 0 \quad \text{if } |a-c|>b.  \label{eq:re1}
	\end{equation}
\end{prop}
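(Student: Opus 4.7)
My plan is to derive an Askey--Wilson-type tridiagonal relation between $g_1$ and $g_2$ alone, use it to handle the base case $b=1$, and then induct on $b$.

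\textbf{A tridiagonal relation.} Using the first AW relation in \eqref{AW-triple} to solve for $\sigma_1 g_2 \sigma_1^{-1}$ in terms of $g_1,g_2$ and the central element $\mu=\chi_s(\kappa+\chi_s)/(q+q^{-1})$, and substituting the result into the second AW relation, I expect (after clearing the denominator $q^2-q^{-2}$) a relation of the shape
\begin{equation*}
g_1^2 g_2 - (q^2+q^{-2})\,g_1 g_2 g_1 + g_2 g_1^2 + (q^2-q^{-2})^2 g_2 \ =\ \Phi(g_1),
\end{equation*}
where $\Phi(g_1)$ is a polynomial of degree at most one in $g_1$ with coefficients in the center of $\AWB$. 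This is the standard Askey--Wilson (tridiagonal) relation and captures the Leonard-pair-like structure of $(g_1,g_2)$.

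\textbf{Base case.} For $b=0$, the statement is immediate from the orthogonality of the idempotents. For $b=1$, I would multiply the tridiagonal relation on the left by $e_1^{(a)}$ and on the right by $e_1^{(c)}$, using $e_1^{(a)}g_1=\chi_a e_1^{(a)}$ and $g_1 e_1^{(c)}=\chi_c e_1^{(c)}$. The left-hand side becomes $f(\chi_a,\chi_c)\,e_1^{(a)}g_2 e_1^{(c)}$ with
\begin{equation*}
f(x,y)\ =\ x^2-(q^2+q^{-2})xy+y^2+(q^2-q^{-2})^2,
\end{equation*}
while the right-hand side is proportional to $\delta_{ac}e_1^{(a)}$. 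For $a\neq c$ this forces $f(\chi_a,\chi_c)\,e_1^{(a)}g_2 e_1^{(c)}=0$. The crucial computation is then that the quadratic $f(\chi_a,y)=0$ has roots exactly $y=\chi_{a\pm 1}$: the discriminant equals $(q^2-q^{-2})^2(\chi_a^2-4)=(q^2-q^{-2})^2(q^{2a+1}-q^{-2a-1})^2$, and after expansion the two roots collapse to $q^{2(a\pm 1)+1}+q^{-2(a\pm 1)-1}$. Since $q$ is generic, $\chi_c=\chi_{a\pm 1}$ if and only if $c=a\pm 1$ (within the range $0\leq c\leq 2s$), so $e_1^{(a)}g_2 e_1^{(c)}=0$ whenever $|a-c|>1$.

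\textbf{Inductive step.} Assuming the statement for all exponents less than $b\geq 2$, I insert the resolution of identity $\sum_{r=0}^{2s}e_1^{(r)}=1$ between $g_2$ and $g_2^{b-1}$ to write
\begin{equation*}
e_1^{(a)}g_2^{b}e_1^{(c)}\ =\ \sum_{r=0}^{2s}\bigl(e_1^{(a)}g_2 e_1^{(r)}\bigr)\bigl(e_1^{(r)}g_2^{b-1}e_1^{(c)}\bigr).
\end{equation*}
By the base case the first factor vanishes unless $|a-r|\leq 1$ and by the induction hypothesis the second vanishes unless $|r-c|\leq b-1$; the triangle inequality then gives $|a-c|\leq b$, which contradicts the hypothesis $|a-c|>b$. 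The main obstacle is the clean derivation of the tridiagonal relation and, especially, the identification of the two roots of $f(\chi_a,y)$ as $\chi_{a\pm 1}$: the algebra itself is routine, but the bookkeeping of $q$-powers is error-prone. Once these two ingredients are in place, the reduction to tridiagonality and the induction are both immediate.
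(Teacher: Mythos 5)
Your proof is correct, and the overall architecture (a base case $b=1$ extracted from the Askey--Wilson relations by sandwiching between $e_1^{(a)}$ and $e_1^{(c)}$, followed by the same induction via the resolution of the identity) matches the paper's. The base case itself is handled differently, though. The paper works directly with the first element of \eqref{AW-triple}: multiplying $\frac{[g_1,g_2]_q}{q^2-q^{-2}} + \sigma_1 g_2 \sigma_1^{-1}$ by $e_1^{(a)}$ and $e_1^{(c)}$ and using $e_1^{(a)}\sigma_1^{\pm 1}=q_a^{\pm 1}e_1^{(a)}$ yields the scalar factor $\frac{q\chi_a-\qi\chi_c}{q^2-q^{-2}}+q_aq_c^{-1}$ in one line, and its nonvanishing for $|a-c|>1$ is simply asserted for generic $q$. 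You instead eliminate $\sigma_1 g_2\sigma_1^{-1}$ to land on the classical Askey--Wilson tridiagonal relation in $g_1,g_2$ alone and then locate the vanishing set of the quadratic $f(\chi_a,y)$ explicitly; your discriminant computation is right and the roots are indeed $\chi_{a\pm 1}$, which (for generic $q$ and $0\leq c\leq 2s$) occur only at $c=a\pm1$. Your route is longer but has the virtue of making the Leonard-pair/tridiagonality structure completely explicit and of pinning down exactly where the coefficient vanishes, rather than appealing to genericity. One small correction: to obtain the relation you display, with $g_1$ appearing quadratically, you must substitute the expression for $\sigma_1 g_2\sigma_1^{-1}$ into the \emph{third} element of \eqref{AW-triple} (the one involving $[\sigma_1 g_2\sigma_1^{-1},g_1]_q$), not the second; substituting into the second would produce the companion relation with $g_2$ quadratic, which does not collapse to a scalar multiple of $e_1^{(a)}g_2e_1^{(c)}$ under the sandwiching. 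With that relabelling the argument goes through as written.
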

\begin{proof}
	The case $b=0$ is obvious from the second relation in \eqref{eq:epropr}. To obtain the case $b=1$,
	multiply the first Askey--Wilson relation in \eqref{AW-triple} by $e_1^{(a)}$ on the left and by $e_1^{(c)}$ on the right with $a\neq c$. Using the properties of the idempotents and the fact that $\kappa$ is central, one finds
	\begin{equation}
		\left( \frac{q\chi_a-\qi\chi_c}{q^2-q^{-2}}+q_aq_c^{-1} \right)e_1^{(a)}  g_2 e_1^{(c)} =0. \label{eq:aAW1c} 
	\end{equation}
	For $q$ generic, 
	the multiplicative factor on the LHS of \eqref{eq:aAW1c} does not vanish for $|a-c|>1$, which implies  relation \eqref{eq:re1} with $b=1$. The other cases are then proved by induction on $b$. Indeed, suppose that \eqref{eq:re1} holds for some integer $0 \leq b \leq 2s$. One can write
	\begin{equation}
		e_1^{(a)}  g_2^{b+1} e_1^{(c)} 
		= \sum_{r=0}^{2\s} e_1^{(a)} g_2 e_1^{(r)} g_2^{b} e_1^{(c)} 
		= e_1^{(a)} g_2 e_1^{(a-1)} g_2^{b} e_1^{(c)} + e_1^{(a)} g_2 e_1^{(a)} g_2^{b} e_1^{(c)}+e_1^{(a)} g_2 e_1^{(a+1)} g_2^{b} e_1^{(c)}.
	\end{equation} 
	By hypothesis, each term on the RHS of the previous equation is seen to be zero if $|a-c|>b+1$, which concludes the proof.  
\end{proof}
\begin{rem}
The preceding proposition simply expresses that $g_2$ acts tridiagonally in the eigenbasis of $g_1$. This is natural since it is well-known that the Askey--Wilson relations are closely related to Leonard systems {\normalfont \cite{Ter}}.
\end{rem}
It is seen from Proposition \ref{prop:AWB1} that the generating family \eqref{eq:genfam3} can be reduced to 
\begin{equation}
	\{ e_1^{(a)} g_2^b e_1^{(c)} \kappa^p\ | \ 0\leq a,c \leq 2\s, \ |a-c| \leq b \leq 2\s, \ p\in \mathbb{N} \}. \label{eq:genfam4}
\end{equation}
By inserting the identity between each factor of $g_2$ in the form of a sum of the idempotents $e_1^{(r)}$, as done in the proof of Proposition \ref{prop:AWB1}, it is also seen that
\begin{equation}
	e_1^{(a)} g_2^{b} e_1^{(c)} = \sum_{\gamma \in \Gamma^{(a,c)}_{b}} \x_\gamma = \sum_{\gamma \in \Gamma^{(a,c)}_{b}} e_1^{(a)} g_2 e_1^{(\gamma_1)} g_2 e_1^{(\gamma_2)} \dots e_1^{(\gamma_{b-1})} g_2 e_1^{(c)}, \label{eq:acpath}
\end{equation}
using the formalism of paths introduced in Section \ref{sec:path}. In particular, when $b=|a-c|$, equation \eqref{eq:acpath} implies that
\begin{equation}
	e_1^{(a)} g_2^{|a-c|} e_1^{(c)} = \x_{\overline{\gamma}(a,c)} , \label{eq:egemin}
\end{equation}
where we recall that $\overline{\gamma}(a,c)$ is the unique path in $\Gamma^{(a,c)}_{|a-c|}$. The following proposition allows to express the elements $\x_\gamma$ associated to the paths $\gamma$ in $\Gamma^{(a,c)}_{b}$ with $b>|a-c|$ in terms of \eqref{eq:egemin}.

\begin{prop}\label{prop:AWB2} For any integer $0\leq a \leq 2\s$,
	\begin{align}	
		&e_1^{(a)} g_2 e_1^{(a)} =  \tau_a(\kappa)  e_1^{(a)}, \label{eq:re2} \\
		&e_1^{(a)} g_2 e_1^{(a-1)}g_2 e_1^{(a)} 
		= F_a(\kappa) e_1^{(a)}, \label{eq:re3} \\
		&e_1^{(a)} g_2 e_1^{(a+1)}g_2 e_1^{(a)} 
		= F_{a+1}(\kappa) e_1^{(a)}, \label{eq:re4}
	\end{align}
	with the convention $e_1^{(-1)}=e_1^{(2\s+1)}=0$ and with
	\begin{equation}
		\tau_a(\kappa)= \frac{\chi_\s(\kappa+\chi_\s)}{\chi_0+\chi_{a}}, \quad F_a(\kappa) = -\frac{[2\s+1-a]_q[2\s+1+a]_q}{(q^a+q^{-a})^2[2a-1]_q[2a+1]_q}(\kappa-\chi_{\s-a})(\kappa-\chi_{\s+a}), \label{eq:tauF}
	\end{equation}
where $\displaystyle [n]_q=\frac{q^n-q^{-n}}{q-q^{-1}}$.
\end{prop}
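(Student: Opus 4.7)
My plan is to prove \eqref{eq:re2} first, and then use it as a building block for \eqref{eq:re3} and \eqref{eq:re4}, which I would handle together by solving a $2 \times 2$ linear system in two unknowns.

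For \eqref{eq:re2}, I would sandwich the first Askey--Wilson relation of \eqref{AW-triple} between two copies of $e_1^{(a)}$. Since $g_1 e_1^{(a)} = \chi_a e_1^{(a)}$ by \eqref{eq:ge}, the $q$-commutator $[g_1, g_2]_q$ collapses to $(q-q^{-1})\chi_a\, e_1^{(a)} g_2 e_1^{(a)}$; the conjugated term $e_1^{(a)} \sigma_1 g_2 \sigma_1^{-1} e_1^{(a)}$ also reduces to $e_1^{(a)} g_2 e_1^{(a)}$ because $\sigma_1$ commutes with $e_1^{(a)}$ and $\sigma_1^{\pm 1} e_1^{(a)} = q_a^{\pm 1} e_1^{(a)}$. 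Combining the two contributions with the central right-hand side and using $\chi_0 + \chi_a = (q+q^{-1}) + \chi_a$ then yields \eqref{eq:re2}.

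For \eqref{eq:re3} and \eqref{eq:re4}, I would set $X := e_1^{(a)} g_2 e_1^{(a-1)} g_2 e_1^{(a)}$ and $Y := e_1^{(a)} g_2 e_1^{(a+1)} g_2 e_1^{(a)}$ and extract two independent linear equations in $(X, Y)$ with right-hand sides of the form (polynomial in $\kappa$)$\cdot e_1^{(a)}$. The first equation comes from sandwiching the second relation of \eqref{AW-triple} (the one involving $[g_2, \sigma_1 g_2 \sigma_1^{-1}]_q$) between two copies of $e_1^{(a)}$. Inserting $\sum_r e_1^{(r)}$ at appropriate places, Proposition \ref{prop:AWB1} restricts the intermediate idempotents to $r \in \{a-1, a, a+1\}$, and the relations $\sigma_1^{\pm 1} e_1^{(r)} = q_r^{\pm 1} e_1^{(r)}$ together with \eqref{eq:re2} (which collapses the diagonal $r = a$ contribution to $\tau_a(\kappa)^2 e_1^{(a)}$) reduce the sandwich to a relation of the form
\[ [2a-1]_q\, X - [2a+3]_q\, Y = P_1(\kappa)\, e_1^{(a)}. \]
The second equation is obtained by sandwiching the central element $\Omega$ of \eqref{eq:Om} between two copies of $e_1^{(a)}$ and invoking the quotient relation \eqref{eq:AWB1}; the same expansion techniques produce an equation whose coefficients on $X$ and $Y$ are $-q^{-2a-1}(q-q^{-1})[2a-1]_q$ and $q^{2a+1}(q-q^{-1})[2a+3]_q$ respectively. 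The determinant of this $2 \times 2$ system is $(q-q^{-1})^2 [2a-1]_q[2a+1]_q[2a+3]_q$, which is non-zero for generic $q$, so Cramer's rule delivers $X$ and $Y$ as scalar multiples of $e_1^{(a)}$.

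The hard part will be the algebraic simplification that matches the resulting scalars with the announced polynomials $F_a(\kappa)$ and $F_{a+1}(\kappa)$: the key step is to verify, via $q$-number identities and the explicit form of $\tau_a(\kappa)$, that the quadratic polynomial in $\kappa$ produced by Cramer's rule factors as $(\kappa - \chi_{s-a})(\kappa - \chi_{s+a})$ with the correct prefactor $-[2s+1-a]_q[2s+1+a]_q/((q^a+q^{-a})^2[2a-1]_q[2a+1]_q)$, and similarly for $F_{a+1}$. Consistency with the conventions $e_1^{(-1)} = e_1^{(2s+1)} = 0$ is automatic: the announced formula gives $F_{2s+1}(\kappa) \equiv 0$ because of the factor $[0]_q = 0$, while identity \eqref{eq:re2} combined with \eqref{eq:AWB2} forces $\kappa e_1^{(0)} = \chi_s e_1^{(0)}$, so that $F_0(\kappa) e_1^{(0)} = 0$ as required.
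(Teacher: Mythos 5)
Your proposal is correct and follows essentially the same route as the paper: equation \eqref{eq:re2} by sandwiching the first relation of \eqref{AW-triple}, then a $2\times 2$ linear system for $e_1^{(a)}g_2e_1^{(a\pm1)}g_2e_1^{(a)}$ obtained from the Askey--Wilson relation involving $[g_2,\sigma_1 g_2\sigma_1^{-1}]_q$ (which is indeed the one the paper computes with in \eqref{eq:aAW3a}, despite its phrasing) and from sandwiching $\Omega$ together with \eqref{eq:AWB1}. The coefficients and determinant you announce match the paper's \eqref{eq:aAW3a2} and \eqref{eq:aOma}, so only the final algebraic simplification remains, exactly as in the paper.
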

\begin{proof}
	Equation \eqref{eq:re2} is obtained by multiplying the first Askey--Wilson relation in \eqref{AW-triple} by $e_1^{(a)}$ both on the left and on the right, and by using the properties of the idempotents. Proceeding similarly with the third Askey--Wilson relation in \eqref{AW-triple}, one gets
	\begin{equation}
		q\ q_a^{-1} e_1^{(a)} g_2 \sigma_1 g_2 e_1^{(a)} - \qi q_a e_1^{(a)} g_2 \sigma_1^{-1} g_2 e_1^{(a)} = (q-\qi)(\chi_\s(\kappa+\chi_s) - \chi_0\chi_a)e_1^{(a)}. \label{eq:aAW3a}
	\end{equation}
	The element $\sigma_1^{\pm1}$ in the previous equation can be written as a sum of the idempotents $e_1^{(r)}$ with coefficients $q_r^{\pm1}$, as in \eqref{eq:sige}. Because of \eqref{eq:re1}, only the terms with $r=a-1,a,a+1$ in this sum remain in \eqref{eq:aAW3a}. Using \eqref{eq:re2} and simplifying the result, one thus finds  
	\begin{equation}	
		[2a+3]_q e_1^{(a)} g_2 e_1^{(a+1)}g_2 e_1^{(a)} -[2a-1]_q e_1^{(a)} g_2 e_1^{(a-1)}g_2 e_1^{(a)} 
		= (\tau_a(\kappa)-\chi_{0})(\tau_a(\kappa)-\chi_{a}) e_1^{(a)}. \label{eq:aAW3a2}
	\end{equation}
	One can also multiply equation \eqref{eq:Om} which defines the central element $\Omega$ on both sides by $e_1^{(a)}$ to obtain similarly
	\begin{align}
		&q^{2a+1}[2a+3]_q e_1^{(a)} g_2 e_1^{(a+1)}g_2 e_1^{(a)} -q^{-2a-1}[2a-1]_q e_1^{(a)} g_2 e_1^{(a-1)}g_2 e_1^{(a)} \label{eq:aOma} \\
		= &\frac{1}{q-q^{-1}} \left(\Omega +q^2\chi_a^2 - \chi_\s(\kappa+\chi_\s)(q\chi_a+\chi_0\tau_a(\kappa))  +(q^2+q^{-2}+q\chi_a)\tau_a^2(\kappa)\right)e_1^{(a)}. \nonumber
	\end{align} 
	Equations \eqref{eq:aAW3a2} and \eqref{eq:aOma} can be solved for $e_1^{(a)} g_2 e_1^{(a\pm1)}g_2 e_1^{(a)}$. With the help of the defining relation \eqref{eq:AWB1}, these solutions are given by \eqref{eq:re3} and \eqref{eq:re4}. 
\end{proof}

The results of Proposition \ref{prop:AWB2} can be understood diagrammatically in terms of paths using
\begin{align}
	\begin{tikzpicture}[scale=1,line width=\lw,baseline={([yshift=-0.1cm]current bounding box.center)}]
		\node at (-0.25cm,0) {\footnotesize $a$};
		\draw[fill=black] (0,0) \bul -- ++\spath;
		\node at (\xyd+0.25cm,0*\xyd) {\footnotesize $a$};
	\end{tikzpicture} \ &= \ \tau_a(\kappa) \
	\begin{tikzpicture}[scale=1,line width=\lw,baseline={([yshift=-0.1cm]current bounding box.center)}]
		\draw[fill=black] (0,0) \bul;
		\node at (0.25cm,0*\xyd) {\footnotesize $a$};
	\end{tikzpicture}, \label{eq:trait} \\
	\begin{tikzpicture}[scale=1,line width=\lw,baseline={([yshift=-0.1cm]current bounding box.center)}]
		\node at (-0.25cm,0) {\footnotesize $a$};
		\draw[fill=black] (0,0) \bul -- ++\dpath -- ++\upath;
		\node at (2*\xyd+0.25cm,0*\xyd) {\footnotesize $a$};
	\end{tikzpicture} \ &= \ F_{a}(\kappa) \
	\begin{tikzpicture}[scale=1,line width=\lw,baseline={([yshift=-0.1cm]current bounding box.center)}]
		\draw[fill=black] (0,0) \bul;
		\node at (0.25cm,0*\xyd) {\footnotesize $a$};
	\end{tikzpicture}, \label{eq:creux} \\
	\begin{tikzpicture}[scale=1,line width=\lw,baseline={([yshift=-0.1cm]current bounding box.center)}]
		\node at (-0.25cm,0) {\footnotesize $a$};
		\draw[fill=black] (0,0) \bul -- ++\upath -- ++\dpath;
		\node at (2*\xyd+0.25cm,0*\xyd) {\footnotesize $a$};
	\end{tikzpicture} \ &= \ F_{a+1}(\kappa) \
	\begin{tikzpicture}[scale=1,line width=\lw,baseline={([yshift=-0.1cm]current bounding box.center)}]
		\draw[fill=black] (0,0) \bul;
		\node at (0.25cm,0*\xyd) {\footnotesize $a$};
	\end{tikzpicture}. \label{eq:pic}
\end{align}
Any path $\gamma \in \Gamma^{(a,c)}_b$ with $b>|a-c|$ is represented by a diagram that necessarily contains parts that look like the LHS of \eqref{eq:trait}--\eqref{eq:pic}. Equations \eqref{eq:trait}--\eqref{eq:pic} hence imply that the elements $\x_\gamma$ associated to these paths $\gamma \in \Gamma^{(a,c)}_b$ are equal to a factor depending on $\kappa$ multiplied by the element \eqref{eq:egemin} associated to the unique path in $\Gamma^{(a,c)}_{|a-c|}$. It follows from this discussion and from equation \eqref{eq:acpath} that for all integers $0 \leq a,c \leq 2\s$ and $n\geq 0$ there is a polynomial $P^{(a,c)}_{n}(\kappa)$ of degree at most $n$ in $\kappa$ such that
	\begin{equation}
		e_1^{(a)} g_2^{|a-c|+n} e_1^{(c)} = P_n^{(a,c)}(\kappa) e_1^{(a)} g_2^{|a-c|} e_1^{(c)}. \label{eq:minpath}
	\end{equation}
Therefore, the generating family \eqref{eq:genfam4} can be reduced to 
\begin{equation}
	\{ e_1^{(a)} g_2^{|a-c|} e_1^{(c)} \kappa^p\ | \ 0\leq a,c \leq 2\s, \ p\in \mathbb{N} \}. \label{eq:genfam5}
\end{equation}
Our goal will now be to further reduce the previous set by restraining the possible values of $p$.

\begin{prop} \label{prop:charkape} For any integer $0 \leq a \leq 2\s$,
	\begin{equation}
		\prod_{r=|a-\s|}^{a+\s}(\kappa-\chi_{r})e_1^{(a)} = 0. \label{eq:polkappae}
	\end{equation}	
	In particular,
	\begin{equation}
		(\kappa-\chi_\s)e_1^{(0)}=0. \label{eq:kap0}
	\end{equation}
\end{prop}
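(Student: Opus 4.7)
The plan is to prove the proposition by induction on $a$, with the base case $a=0$ also yielding the ``in particular'' claim. For the base case, I would combine the identity $e_1^{(0)}g_2e_1^{(0)}=\tau_0(\kappa)e_1^{(0)}=\frac{\chi_s(\kappa+\chi_s)}{2\chi_0}e_1^{(0)}$ coming from \eqref{eq:re2} at $a=0$ with the defining relation \eqref{eq:AWB2b}, $e_1^{(0)}g_2e_1^{(0)}=\frac{\chi_s^2}{\chi_0}e_1^{(0)}$; equating the two expressions for $e_1^{(0)}g_2e_1^{(0)}$ and using $\chi_s\neq 0$ for generic $q$ yields $(\kappa-\chi_s)e_1^{(0)}=0$, which is exactly \eqref{eq:kap0} and the $a=0$ case of the general statement.

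For the inductive step, set $P_a(\kappa):=\prod_{r=|a-s|}^{a+s}(\kappa-\chi_r)$ and assume $P_a(\kappa)e_1^{(a)}=0$. Applying relation \eqref{eq:re3}, multiplying on the left by the central element $P_a(\kappa)$, and using the inductive hypothesis gives
\[
P_a(\kappa)F_{a+1}(\kappa)e_1^{(a+1)}=e_1^{(a+1)}g_2\bigl(P_a(\kappa)e_1^{(a)}\bigr)g_2e_1^{(a+1)}=0 .
\]
Writing $F_{a+1}(\kappa)=C_{a+1}(\kappa-\chi_{s-a-1})(\kappa-\chi_{s+a+1})$ with $C_{a+1}\neq 0$ for generic $q$, and using the identity $\chi_{-m}=\chi_{m-1}$: when $a+1\leq s$, the two roots $\chi_{s-a-1}$ and $\chi_{s+a+1}$ of $F_{a+1}$ are exactly those needed to enlarge the index range of $P_a$ to that of $P_{a+1}$, so $P_aF_{a+1}=C_{a+1}P_{a+1}$ and $P_{a+1}(\kappa)e_1^{(a+1)}=0$ follows immediately, establishing the proposition for all $a\leq s$.

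The hard part will be the case $a+1>s$, where $\chi_{s-a-1}=\chi_{a-s}$ already appears as a root of $P_a$, giving $P_aF_{a+1}=C_{a+1}(\kappa-\chi_{a-s})^2P_{a+1}$; iterating the upward step only yields the annihilator $\prod_{j=0}^{a-s}(\kappa-\chi_j)^2 P_{a+1}(\kappa)e_1^{(a+1)}=0$ with superfluous squared factors. To remove them, I would run a parallel downward recursion using \eqref{eq:re4} starting from a base case $P_{2s}(\kappa)e_1^{(2s)}=0$, which I would obtain by sandwiching the characteristic equation \eqref{eq:chareqg} of $g_2$ by $e_1^{(2s)}$: using \eqref{eq:re1}--\eqref{eq:re4} to reduce each $e_1^{(2s)}g_2^ke_1^{(2s)}$ to a polynomial in $\kappa$ times $e_1^{(2s)}$ produces a polynomial annihilator of degree at most $2s+1=\deg P_{2s}$, which under the surjection of Proposition \ref{prop:surj} (where $\kappa$ on $e_1^{(2s)}$ has exactly the $2s+1$ eigenvalues $\chi_s,\dots,\chi_{3s}$) must be a scalar multiple of $P_{2s}$, with a nonzero leading coefficient to be confirmed by direct path computation. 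An analogous downward computation gives a second annihilator $\prod_{r=a+s+2}^{3s}(\kappa-\chi_r)^2 P_{a+1}(\kappa)e_1^{(a+1)}=0$ whose extra root set $\{a+s+2,\dots,3s\}$ is disjoint from the upward one $\{0,\dots,a-s\}$ for generic $q$; centrality of $\kappa$ combined with Bezout's identity in $\mathbb{C}[\kappa]$ then combines the two annihilators into $P_{a+1}(\kappa)e_1^{(a+1)}=0$, completing the induction.
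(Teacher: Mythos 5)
Your base case $a=0$ and the upward induction for $a\leq s$ are correct and essentially reproduce the paper's argument (the paper telescopes $(\kappa-\chi_\s)F_1F_2\cdots F_a e_1^{(a)}$ in one stroke rather than by induction, but the computation is the same). The genuine gap is in the case $a>s$, and it is not merely the deferred "direct path computation": the base case $\prod_{r=\s}^{3\s}(\kappa-\chi_r)e_1^{(2\s)}=0$ on which your downward recursion rests \emph{cannot} be obtained by sandwiching the characteristic equation \eqref{eq:chareqg} of $g_2$ with $e_1^{(2\s)}$, because that computation produces the identically zero polynomial. Indeed, write the reduction as $R(\kappa)e_1^{(2\s)}=0$ with $R(\kappa)=\sum_k c_k\rho_k(\kappa)$, where $\prod_{p=0}^{2\s}(X-\chi_p)=\sum_k c_kX^k$ and $e_1^{(2\s)}g_2^k e_1^{(2\s)}=\rho_k(\kappa)e_1^{(2\s)}$, and evaluate at $\kappa=\chi_{\s-1}$. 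Every non-constant path in $\Gamma^{(2\s,2\s)}_k$ acquires in its reduction at least one factor $F_{2\s}(\kappa)\propto(\kappa-\chi_{-\s})(\kappa-\chi_{3\s})$, which vanishes there since $\chi_{-\s}=\chi_{\s-1}$; the constant path contributes $\tau_{2\s}(\chi_{\s-1})^k=\chi_0^k$ by \eqref{eq:idchi}. Hence $R(\chi_{\s-1})=\prod_{p=0}^{2\s}(\chi_0-\chi_p)=0$. But $\chi_{\s-1}$ is not a root of $\prod_{r=\s}^{3\s}(\kappa-\chi_r)$, so if $R$ were nonzero your own surjectivity argument would force $R$ to be a nonzero scalar multiple of that product, contradicting $R(\chi_{\s-1})=0$. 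Therefore $R\equiv 0$ and the step yields $0=0$. (For $s=1/2$ one checks directly that $F_1(\kappa)+\tau_1(\kappa)^2-(\chi_0+\chi_1)\tau_1(\kappa)+\chi_0\chi_1$ vanishes identically.) Without this base case the downward recursion never starts, and the upward recursion alone only gives the annihilator with the superfluous squared factors you identified.

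This failure points to the real content of the proposition for $s<a\leq 2\s$, which your proposal never touches: one must find a \emph{different} element of the algebra whose sandwiching by $e_1^{(a)}$ produces a provably nonzero polynomial of degree at most $2\s+1$. The paper uses the braid-type relation $\sigma_1\sigma_2 g_1=g_2\sigma_1\sigma_2$ from \eqref{rel-si-g}, sandwiched by $e_1^{(a)}$ to obtain \eqref{eq:abraida}, and then devotes the whole of Appendix \ref{app} to proving that the resulting polynomial $Q^{(a)}$ is not identically zero, by evaluating at $\kappa=\chi_{\s-a}$ and passing to the limit $q\to e^{2\pi i/(4\s+4)}$ so that only the constant path survives with a nonzero contribution. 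It is telling that your argument nowhere uses the braid relation \eqref{eq:braid}, which is a defining relation of $\AWB$; the Askey--Wilson relations alone do not suffice here, and the non-vanishing you defer to "direct path computation" is precisely the hard part of the proof --- and is false for the element you chose.
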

\begin{proof}
	The case $a=0$, which is \eqref{eq:kap0}, can be obtained by comparing the defining relation \eqref{eq:AWB2} of $\A$ with the case $a=0$ of equation \eqref{eq:re2}. Then, for $0< a \leq \s$, one can use relations \eqref{eq:re3} and \eqref{eq:kap0} to proceed as follows:
	\begin{align}
		(\kappa-\chi_\s)F_{1}F_{2} \dots F_{a-1}F_{a}e_1^{(a)} 
		&= (\kappa-\chi_\s)F_{1}F_{2} \dots F_{a-1}e_1^{(a)} g_2 e_1^{(a-1)}g_2 e_1^{(a)} \label{eq:F1Fa}\\
		&=(\kappa-\chi_\s)F_{1}F_{2} \dots F_{a-2}e_1^{(a)} g_2 e_1^{(a-1)} g_2 e_1^{(a-2)}g_2 e_1^{(a-1)}g_2 e_1^{(a)}\\
		&=(\kappa-\chi_\s)e_1^{(a)} g_2 e_1^{(a-1)} \dots e_1^{(1)}g_2 e_1^{(0)}g_2e_1^{(1)} \dots e_1^{(a-1)}g_2 e_1^{(a)}\\
		&=0.
	\end{align}
	In the previous equations, the dependence on $\kappa$ of the functions $F_i$ is not written for simplicity. From the explicit expression of these functions given in \eqref{eq:tauF}, it is seen that the LHS of \eqref{eq:F1Fa} contains only once all the factors $(\kappa-\chi_r)$ with $r=\s-a,\s-a+1,\dots,\s-1,\s,\s+1,\dots,\s+a-1,\s+a$ and nothing more except some non-vanishing constants that can be simplified from the equation. This proves \eqref{eq:polkappae} also in this case.
	 
	For the case $\s<a\leq 2\s$, the previous method gives more factors $(\kappa-\chi_r)$ than desired. One can instead multiply the second relation in \eqref{rel-si-g} by $e_1^{(a)}$ on both sides and use equation \eqref{eq:sige} together with \eqref{eq:re1} to obtain
	\begin{equation}
		\chi_a q_a e_1^{(a)} \sigma_2 e_1^{(a)}-\sum_{r=a,a\pm 1} q_r e_1^{(a)} g_2 e_1^{(r)} \sigma_2 e_1^{(a)}=0. \label{eq:abraida}
	\end{equation}	
	Let us recall that $\sigma_2$ can be written as a linear combination of the elements $g_2^n$ for $n=0,1,\dots,2\s$, as seen from equations \eqref{eq:sige} and \eqref{eq:eprodg}. Proposition \ref{prop:AWB2} and equation \eqref{eq:minpath} then imply that the LHS of \eqref{eq:abraida} is equal to $Q^{(a)}(\kappa) e_1^{(a)}$, where $Q^{(a)}(\kappa)$ is a polynomial in $\kappa$ of maximal degree $2\s+1$. It can be shown that this polynomial does not vanish identically. This is the most difficult part of the proof and the details are given in Appendix \ref{app}. Besides, we know that the image of $\kappa$ in the centralizer $\Z$ satisfies the polynomial equation \eqref{eq:polkappae} which is of degree $2\s+1$ exactly. Since this polynomial is minimal in the centralizer algebra and since the map from $\A$ to $\Z$ is surjective, we conclude that $Q^{(a)}(\kappa)$ is equal to the polynomial of \eqref{eq:polkappae} up to a non-zero multiplicative factor. 
\end{proof}

\begin{rem}
Considering the least common multiple of the polynomials which appear in \eqref{eq:polkappae} for $a=0,1,\dots,2\s$, we find as a consequence of Proposition \refeq{prop:charkape} that
\begin{equation}
		\prod_{r= j_\text{min}}^{3s}  (\kappa -\chi_{r} )=0, \label{eq:chareqkap}
	\end{equation}
where $j_\text{min}$ is given in \eqref{eq:jmin}. Note that \eqref{eq:chareqkap} also follows immediately from the isomorphism with the centralizer, see \eqref{eq:repCasimir3}.
\end{rem}

One deduces from Proposition \ref{prop:charkape} that
\begin{equation}
	Q^{(a,c)}(\kappa) \ e_1^{(a)} g_2^{|a-c|} e_1^{(c)} = 0,
\end{equation}
where $Q^{(a,c)}(\kappa)$ is the greatest common divisor of the annihilating polynomials of $\kappa$ on $e_1^{(a)}$ and $e_1^{(c)}$ and has degree $n(a,c)+1$ with
\begin{equation}
	n(a,c)= \begin{cases}
		2\min\{a,c\}, &\text{if } a+c \leq 2s, \\
		2s-|a-c|, &\text{if } a+c > 2s.
	\end{cases} \label{eq:ndegree}
\end{equation}  
It follows that the set
\begin{equation}
	\{ e_1^{(a)} g_2^{|a-c|} e_1^{(c)} \kappa^p\ | \ 0\leq a,c \leq 2\s, \ 0\leq p \leq n(a,c) \} \label{eq:genfam6}
\end{equation}
is a generating family for the algebra $\A$. Comparing the degrees \eqref{eq:ndegree} with the dimensions \eqref{eq:dimcentr2}, it is seen that the cardinality of the set \eqref{eq:genfam6} is equal to the dimension of the centralizer $\Z$. We can therefore conclude that $\A$ is isomorphic to $\Z$ and that the set \eqref{eq:genfam6} is a basis.

\begin{rem}\label{rem:relkape0}
	In the quotient of $\AWB$ by the special Askey--Wilson relation \eqref{eq:AWB1b} only, it is possible to show that
	\begin{equation}
		(\kappa-\chi_{\s})^2e_1^{(0)}=0. \label{eq:re5}
	\end{equation}
	Indeed, \eqref{eq:re5} follows from equation \eqref{eq:re3} with $a=0$. We were not able however to show that \eqref{eq:kap0} holds in this quotient for any spin $s$, which explains the addition of relation \eqref{eq:AWB2b} in Theorem \refeq{thm:iso}. Note that for specific values of the spin $s$, we have a procedure for proving that \eqref{eq:kap0} is implied by the other relations which goes as follows. First, one can multiply the characteristic polynomial equation \eqref{eq:chareqg} of $g_2$ by the idempotent $e_1^{(0)}$ on the left and on the right. The LHS of the resulting equation can be expressed as a sum over elements $\x_\gamma$ where $\gamma \in \Gamma_{2s+1}^{(0,0)}$. One can then use the relations of Proposition \refeq{prop:AWB2} to write this sum of elements as a polynomial in $\kappa$ of maximal degree $2s+1$ multiplied by $e_1^{(0)}$. For $s=1/2,1,3/2,2$ we have verified that this polynomial contains the factor $(\kappa-\chi_s)$ only once. In these cases, one then deduces from \eqref{eq:re5} that \eqref{eq:kap0} holds.      
\end{rem}

\subsection{A basis using paths}

We will now provide another basis for the centralizer $\Z$ which makes use of the path elements $\x_\gamma$ defined in \eqref{eq:Egam}.

Let $a$ be an integer such that $0\leq a \leq 2s$. We define a sequence $L_a$ of paths in $\Gamma^{(a,a)}$ with lengths that increase by one at each step as follows:
\begin{equation}
	L_a = \left((a), (a,a), (a,a-1,a), (a,a-1,a-1,a), \dots, (a,a-1,\dots,1,0,1,\dots,a-1,a)\right).
\end{equation}
Note that the sequence $L_a$ stops after the unique path $\gamma=(\gamma_i)_{i=0}^{N} \in \Gamma^{(a,a)}$ of length $N=2a$ that contains only once the value zero at the position $i=a$. The cardinality of $L_a$ is thus 
\begin{equation}
	|L_a| = 2a+1. \label{eq:cardL}
\end{equation}
A diagrammatic illustration of the elements of the sequences $L_a$ for $a=0,1,2$ is given in Figure \ref{fig:Lpaths}.
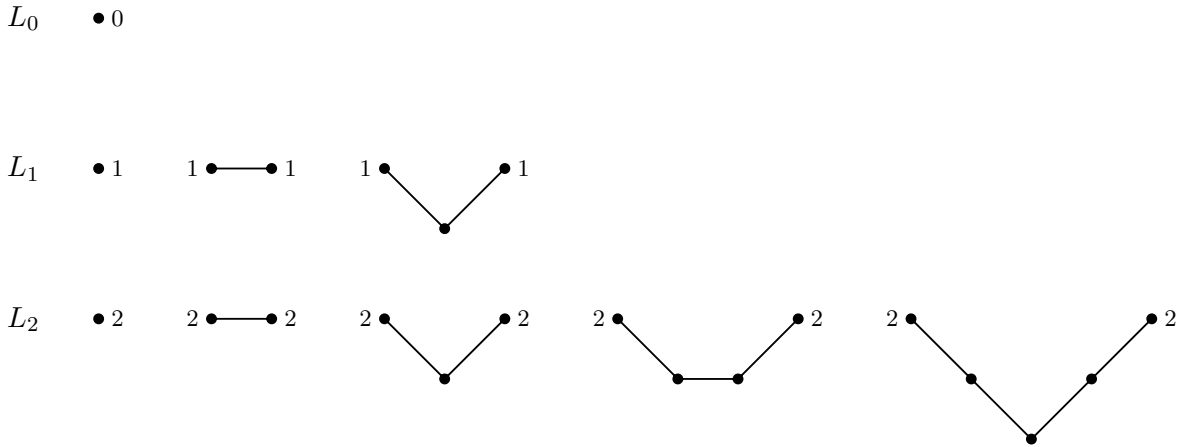
\begin{figure}[h]
	\begin{tikzpicture}[scale=1,line width=\lw,baseline={([yshift=-0.1cm]current bounding box.center)}]
		\node at (-1cm,0) {$L_0$}; 
		\draw[fill=black] (0,0) \bul;
		\node at (0.25cm,0*\xyd) {\footnotesize $0$};
		
		\node at (-1cm,-2cm) {$L_1$};
		\begin{scope}[yshift=-2cm]
			\draw[fill=black] (0,0) \bul;
			\node at (0.25cm,0*\xyd) {\footnotesize $1$};
			
			\begin{scope}[xshift=1.5cm]
				\node at (-0.25cm,0) {\footnotesize $1$};
				\draw[fill=black] (0,0) \bul -- ++\spath;
				\node at (1*\xyd+0.25cm,0*\xyd) {\footnotesize $1$};
			\end{scope}
			\begin{scope}[xshift=2*1.5cm+\xyd]
				\node at (-0.25cm,0) {\footnotesize $1$};
				\draw[fill=black] (0,0) \bul -- ++\dpath -- ++\upath;
				\node at (2*\xyd+0.25cm,0*\xyd) {\footnotesize $1$};
			\end{scope}
		\end{scope}
	
		\node at (-1cm,-4cm) {$L_2$};
		\begin{scope}[yshift=-4cm]
			\draw[fill=black] (0,0) \bul;
			\node at (0.25cm,0*\xyd) {\footnotesize $2$};
			
			\begin{scope}[xshift=1.5cm]
				\node at (-0.25cm,0) {\footnotesize $2$};
				\draw[fill=black] (0,0) \bul -- ++\spath;
				\node at (1*\xyd+0.25cm,0*\xyd) {\footnotesize $2$};
			\end{scope}
			\begin{scope}[xshift=2*1.5cm+\xyd]
				\node at (-0.25cm,0) {\footnotesize $2$};
				\draw[fill=black] (0,0) \bul -- ++\dpath -- ++\upath;
				\node at (2*\xyd+0.25cm,0*\xyd) {\footnotesize $2$};
			\end{scope}
		
			\begin{scope}[xshift=3*1.5cm+3*\xyd]
				\node at (-0.25cm,0) {\footnotesize $2$};
				\draw[fill=black] (0,0) \bul -- ++\dpath -- ++\spath -- ++\upath;
				\node at (3*\xyd+0.25cm,0*\xyd) {\footnotesize $2$};
			\end{scope}
			
			\begin{scope}[xshift=4*1.5cm+6*\xyd]
				\node at (-0.25cm,0) {\footnotesize $2$};
				\draw[fill=black] (0,0) \bul -- ++\dpath -- ++\dpath -- ++\upath -- ++\upath;
				\node at (4*\xyd+0.25cm,0*\xyd) {\footnotesize $2$};
			\end{scope}
		\end{scope}
	\end{tikzpicture}
	\centering
	\caption{Elements of the sequences $L_a$ for $a=0,1,2$.}
	\label{fig:Lpaths}
\end{figure}

Consider the following set of paths:
\begin{equation}
	\mathcal{P}_s^{(a,c)} = 
	\begin{cases}
		\{ \gamma \cdot \overline{\gamma}{(a,c)} \ | \ \gamma=(\gamma_i)_{i=0}^N \in L_a, \  N \leq 2s-(c-a)  \}, & \text{if } a \leq c, \\
		\{ \overline{\gamma}{(a,c)} \cdot \gamma  \ | \ \gamma=(\gamma_i)_{i=0}^N \in L_c, \ N \leq 2s-(a-c)  \}, & \text{if } a > c.
	\end{cases}	  \label{eq:Psac} 
\end{equation} 
The set $\mathcal{P}_s^{(a,c)}$ consists of concatenating the paths $\gamma \in L_{\min\{a,c\}}$ of length bounded by $2s-|a-c|$ on the lowest endpoint of the direct path $\overline{\gamma}{(a,c)}$ going from $a$ to $c$ with length $|a-c|$. The result of the concatenation is a path going from $a$ to $c$ of maximal length $2s$. As an example, Figure \ref{fig:basispaths} illustrates all the paths in the set $\mathcal{P}_2^{(2,4)}$, which can be obtained from the sequence $L_2$ in Figure \ref{fig:Lpaths}. Recall that the longest path in $L_{\min\{a,c\}}$ has length $2\min\{a,c\}$. If $2\min\{a,c\} \leq 2s-|a-c|$, that is if $a+c \leq 2s$, then it follows from \eqref{eq:cardL} that $|\mathcal{P}_s^{(a,c)}| = |L_{\min\{a,c\}}|= 2\min\{a,c\}+1$. If $2\min\{a,c\} > 2s-|a-c|$, that is if $a+c > 2s$, then $|\mathcal{P}_s^{(a,c)}|=2s-|a-c|+1$. In summary,
\begin{equation}
	|\mathcal{P}_s^{(a,c)}| =
	\begin{cases}
		2\min\{a,c\}+1, & \text{if }  a+c \leq 2s,\\
		2s-|a-c|+1, & \text{if } a+c > 2s.
	\end{cases} \label{eq:cardP}
\end{equation}  
\begin{figure}[h]
	\begin{tikzpicture}[scale=1,line width=\lw,baseline={([yshift=-0.1cm]current bounding box.center)}]
		
		\node at (-1.5cm,0) {$\mathcal{P}_2^{(2,4)}$};
			\node at (-0.25cm,0) {\footnotesize $2$};
			\draw[fill=black] (0,0) \bul -- ++\upath -- ++\upath;
			\node at (2*\xyd+0.25cm,2*\xyd) {\footnotesize $4$};
			
			\begin{scope}[xshift=1.5cm+2*\xyd]
				\node at (-0.25cm,0) {\footnotesize $2$};
				\draw[fill=black] (0,0) \bul -- ++\spath -- ++\upath -- ++\upath;
				\node at (3*\xyd+0.25cm,2*\xyd) {\footnotesize $4$};
			\end{scope}
			\begin{scope}[xshift=2*1.5cm+5*\xyd]
				\node at (-0.25cm,0) {\footnotesize $2$};
				\draw[fill=black] (0,0) \bul -- ++\dpath -- ++\upath -- ++\upath -- ++\upath;
				\node at (4*\xyd+0.25cm,2*\xyd) {\footnotesize $4$};
			\end{scope}
	\end{tikzpicture}
	\centering
	\caption{Elements of the set $\mathcal{P}_2^{(2,4)}$.}
	\label{fig:basispaths}
\end{figure}
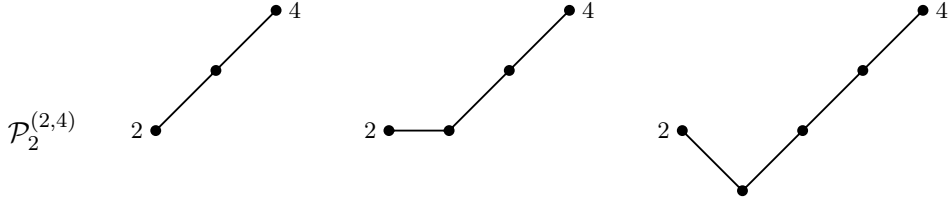

Consider now the following set of elements in $\AWB$ (with $\x_\gamma$ defined in \eqref{eq:Egam})
\begin{equation}
	\bigcup_{a,c=0}^{2s}
	\{ x_\gamma \ | \ \gamma \in \mathcal{P}_s^{(a,c)} \}. \label{eq:basispaths}
\end{equation}
Comparing \eqref{eq:cardP} with \eqref{eq:dimcentr2} and \eqref{eq:ndegree}, it is seen that the set \eqref{eq:basispaths} has cardinality equal to the dimension of the algebra $\A \cong \Z$. Therefore, in order to show that it is a basis for $\A$, it is sufficient to verify that it is a generating family. From the definition \eqref{eq:Psac}, any element in \eqref{eq:basispaths} can be written as
\begin{equation}
	\x_{\gamma \cdot \overline{\gamma}{(a,c)}} = \x_{\gamma} \x_{\overline{\gamma}{(a,c)}} \quad \text{or} \quad \x_{\overline{\gamma}{(a,c)} \cdot \gamma} = \x_{\overline{\gamma}{(a,c)}} \x_{\gamma}, \label{eq:basispathselem}
\end{equation}
where $\gamma$ is a path in $L_{\min\{a,c\}}$ with maximal length $|\mathcal{P}_s^{(a,c)}|-1=n(a,c)$, see equations \eqref{eq:ndegree} and \eqref{eq:cardP}.
Using relations \eqref{eq:trait} and \eqref{eq:creux}, it is observed that any element $\x_{\gamma}$ in \eqref{eq:basispathselem} can be expressed as a polynomial in $\kappa$ of degree equal to the length of $\gamma$. It then follows from \eqref{eq:egemin} and \eqref{eq:basispathselem} that any element of the basis \eqref{eq:genfam6} can be given in terms of the elements of the set \eqref{eq:basispaths} with a triangular transformation. We conclude that \eqref{eq:basispaths} is a basis for $\A \cong \Z$.  

\begin{rem}
	This description of the centralizer $\Z$ in terms of a basis given as paths allows for a diagrammatic interpretation of the relation \eqref{eq:AWB2b} added in the quotient of $\AWB$. Indeed, this relation means that the element $e_1^{(0)}g_2e_1^{(0)}$ associated to the diagram
	$\begin{tikzpicture}[scale=1,line width=\lw,baseline={([yshift=-0.1cm]current bounding box.center)}]
		\node at (-0.25cm,0) {\footnotesize $0$};
		\draw[fill=black] (0,0) \bul -- ++\spath;
		\node at (1*\xyd+0.25cm,0*\xyd) {\footnotesize $0$};
	\end{tikzpicture}$ is proportional to the element $e_1^{(0)}$ associated to the diagram $\begin{tikzpicture}[scale=1,line width=\lw,baseline={([yshift=-0.1cm]current bounding box.center)}]
	\draw[fill=black] (0,0) \bul;
	\node at (0.25cm,0*\xyd) {\footnotesize $0$};
\end{tikzpicture}$. Therefore relation \eqref{eq:AWB2b} forbids the use of paths containing a horizontal segment placed on the vertical level $0$ in the construction of the basis \eqref{eq:basispaths}. 
\end{rem}  

\subsection{Temperley--Lieb and Birman--Murakami--Wenzl algebras}
We conclude this section by briefly considering two explicit examples of the algebra $\A \cong \Z$, namely when $s=1/2$ and $s=1$. In these cases, the centralizer algebra $\Z$ is known to be isomorphic to the Temperley--Lieb and Birman--Murakami--Wenzl algebras respectively (on three strands). We hence provide the connection between the known presentation of the centralizer in terms of these algebras and the presentation using the Askey--Wilson braid algebra obtained in this paper.   

Let us start with the case $s=1/2$. The dimension of the centralizer $Z_{1/2}\cong \mathcal{A}_{1/2}$ is $5$, as can be computed directly from \eqref{eq:dimcentr}. For each $i=1,2$, there are only two idempotent elements $e_i^{(0)}$ and $e_i^{(1)}$, and the characteristic equations \eqref{eq:chareqsig} and \eqref{eq:chareqg} for $\sigma_i$ and $g_i$ are of degree $2$. The elements of the basis \eqref{eq:genfam6} in this case are
\begin{equation}
	\{e_1^{(0)},e_1^{(0)}g_2e_1^{(1)},e_1^{(1)}g_2e_1^{(0)},e_1^{(1)},e_1^{(1)}\kappa\}.
\end{equation}
Moreover, the paths of the set \eqref{eq:Psac} that are used to construct the basis elements \eqref{eq:basispaths} are all illustrated in Figure \ref{fig:basispathsTL}.
\begin{figure}[h]
	\begin{tikzpicture}[scale=1,line width=\lw,baseline={([yshift=-0.1cm]current bounding box.center)}]
		
		\draw[fill=black] (0,0) \bul;
		\node at (0.25cm,0*\xyd) {\footnotesize $0$};
		
		\begin{scope}[xshift=1.5cm]
			\node at (-0.25cm,0) {\footnotesize $0$};
			\draw[fill=black] (0,0) \bul -- ++\upath;
			\node at (\xyd+0.25cm,1*\xyd) {\footnotesize $1$};
		\end{scope}
		
		\begin{scope}[xshift=2*1.5cm+\xyd]
			\node at (-0.25cm,\xyd) {\footnotesize $1$};
			\draw[fill=black] (0,\xyd) \bul -- ++\dpath;
			\node at (\xyd+0.25cm,0) {\footnotesize $0$};
		\end{scope}
		
		\begin{scope}[xshift=3*1.5cm+2*\xyd,yshift=\xyd]
			\draw[fill=black] (0,0) \bul;
			\node at (0.25cm,0*\xyd) {\footnotesize $1$};
			
			\begin{scope}[xshift=1.5cm]
				\node at (-0.25cm,0) {\footnotesize $1$};
				\draw[fill=black] (0,0) \bul -- ++\spath;
				\node at (1*\xyd+0.25cm,0*\xyd) {\footnotesize $1$};
			\end{scope}
		\end{scope}
	\end{tikzpicture}
	\centering
	\caption{Paths associated to the basis elements \eqref{eq:basispaths} in the case $s=1/2$.}
	\label{fig:basispathsTL}
\end{figure}
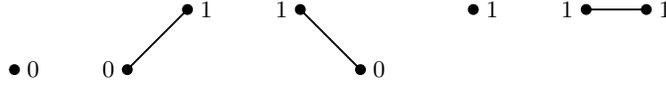

Because of the first relation in \eqref{eq:epropr}, one can write
\begin{equation}
	e_i^{(1)}=1-e_i^{(0)}.
\end{equation}
It then follows from \eqref{eq:sige} and \eqref{eq:gi} that for $i=1,2$
\begin{align}
	&\sigma_i = q_1 + (q_0-q_1)e_i^{(0)} = -q^{2} + q(q+\qi)e_i^{(0)}, \label{eq:sigTL} \\
	&g_i = \chi_1 + (\chi_0-\chi_1)e_i^{(0)} = (q^3+q^{-3}) - (q-\qi)^2(q+\qi)e_i^{(0)}. \label{eq:gTL} 
\end{align}
We now provide the connection with the Temperley--Lieb algebra.
\begin{defi}{\normalfont \cite{TL}} 
	The Temperley--Lieb algebra $TL_3(q)$ is generated by $e_1$ and $e_2$ with the following defining relations
	\begin{align}
		&e_1^2=(q+\qi)e_1, \quad e_2^2=(q+\qi)e_2, \label{eq:TL1} \\
		&e_1e_2e_1=e_1, \quad e_2e_1e_2=e_2. \label{eq:TL2} 
	\end{align}
\end{defi}
On one hand, we have shown that the quotient $\mathcal{A}_{1/2}$ is isomorphic to the centralizer $Z_{1/2}$, and on the other hand, it is known that $Z_{1/2}$ is isomorphic to $TL_3(q)$. One can then verify that the following mapping extends to an algebra isomorphism from $\mathcal{A}_{1/2}$ to $TL_3(q)$: 
\begin{equation}
	e_i^{(0)} \mapsto \frac{1}{(q+\qi)}e_i, \quad \text{for } i=1,2. \label{eq:mapTL}
\end{equation}
It follows from \eqref{eq:sigTL} and \eqref{eq:gTL} that for $i=1,2$
\begin{align}
	&\sigma_i \mapsto  -q^{2} + qe_i, \\
	&g_i \mapsto (q^3+q^{-3}) - (q-\qi)^2e_i. 
\end{align}

Let us now consider similarly the case $s=1$. The dimension of the centralizer $Z_1 \cong \mathcal{A}_{1} $ is $15$. For each $i=1,2$, there are three idempotents $e_i^{(0)},e_i^{(1)},e_i^{(2)}$ in the algebra $\mathcal{A}_{1}$ and the characteristic equations for $\sigma_i$ and $g_i$ are of degree 3. The basis elements \eqref{eq:genfam6} are 
\begin{align}
	\{e_1^{(0)},e_1^{(0)}g_2e_1^{(1)},e_1^{(0)}g_2^2e_1^{(2)},e_1^{(1)}g_2e_1^{(0)},e_1^{(1)} ,e_1^{(1)}\kappa,e_1^{(1)}\kappa^2,e_1^{(1)}g_2e_1^{(2)},e_1^{(1)}g_2e_1^{(2)}\kappa, \nonumber\\
	e_1^{(2)}g_2^2e_1^{(0)},e_1^{(2)}g_2e_1^{(1)},e_1^{(2)}g_2e_1^{(1)}\kappa,e_1^{(2)},e_1^{(2)}\kappa,e_1^{(2)}\kappa^2\},
\end{align}
and the paths associated to the basis \eqref{eq:basispaths} are illustrated in Figure \ref{fig:basispathsBMW}. In this case, there is a connection with the Birman--Murakami--Wenzl algebra.
\begin{figure}[h]
	\begin{tikzpicture}[scale=1,line width=\lw,baseline={([yshift=-0.1cm]current bounding box.center)}]
		
		\draw[fill=black] (0,0) \bul;
		\node at (0.25cm,0*\xyd) {\footnotesize $0$};
		
		\begin{scope}[xshift=1.5cm]
			\node at (-0.25cm,0) {\footnotesize $0$};
			\draw[fill=black] (0,0) \bul -- ++\upath;
			\node at (\xyd+0.25cm,1*\xyd) {\footnotesize $1$};
		\end{scope}
	
		\begin{scope}[xshift=2*1.5cm+\xyd]
			\node at (-0.25cm,0) {\footnotesize $0$};
			\draw[fill=black] (0,0) \bul -- ++\upath -- ++\upath;
			\node at (2*\xyd+0.25cm,2*\xyd) {\footnotesize $2$};
		\end{scope}
		
		\begin{scope}[xshift=3*1.5cm+3*\xyd]
			\node at (-0.25cm,\xyd) {\footnotesize $1$};
			\draw[fill=black] (0,\xyd) \bul -- ++\dpath;
			\node at (\xyd+0.25cm,0) {\footnotesize $0$};
		\end{scope}
	
		\begin{scope}[xshift=4*1.5cm+4*\xyd]
			\draw[fill=black] (0,\xyd) \bul;
			\node at (0.25cm,1*\xyd) {\footnotesize $1$};
		\end{scope}
		
		\begin{scope}[xshift=5*1.5cm+4*\xyd]
			\node at (-0.25cm,\xyd) {\footnotesize $1$};
			\draw[fill=black] (0,\xyd) \bul -- ++\spath;
			\node at (1*\xyd+0.25cm,1*\xyd) {\footnotesize $1$};
		\end{scope}	
	
		\begin{scope}[xshift=6*1.5cm+5*\xyd]
			\node at (-0.25cm,\xyd) {\footnotesize $1$};
			\draw[fill=black] (0,\xyd) \bul  -- ++\dpath -- ++\upath;
			\node at (2*\xyd+0.25cm,1*\xyd) {\footnotesize $1$};
		\end{scope}	
	
		\begin{scope}[xshift=0*1.5cm+0*\xyd,yshift=-3*\xyd]
			\node at (-0.25cm,\xyd) {\footnotesize $1$};
			\draw[fill=black] (0,\xyd) \bul  -- ++\upath;
			\node at (1*\xyd+0.25cm,2*\xyd) {\footnotesize $2$};
		\end{scope}
	
		\begin{scope}[xshift=1*1.5cm+1*\xyd,yshift=-3*\xyd]
			\node at (-0.25cm,\xyd) {\footnotesize $1$};
			\draw[fill=black] (0,\xyd) \bul -- ++\spath  -- ++\upath;
			\node at (2*\xyd+0.25cm,2*\xyd) {\footnotesize $2$};
		\end{scope}		
		
		\begin{scope}[xshift=2*1.5cm+3*\xyd,yshift=-3*\xyd]
			\node at (-0.25cm,2*\xyd) {\footnotesize $2$};
			\draw[fill=black] (0,2*\xyd) \bul -- ++\dpath -- ++\dpath;
			\node at (2*\xyd+0.25cm,0) {\footnotesize $0$};
		\end{scope}
	
		\begin{scope}[xshift=3*1.5cm+5*\xyd,yshift=-3*\xyd]
			\node at (-0.25cm,2*\xyd) {\footnotesize $2$};
			\draw[fill=black] (0,2*\xyd) \bul -- ++\dpath;
			\node at (1*\xyd+0.25cm,\xyd) {\footnotesize $1$};
		\end{scope}
	
		\begin{scope}[xshift=4*1.5cm+6*\xyd,yshift=-3*\xyd]
			\node at (-0.25cm,2*\xyd) {\footnotesize $2$};
			\draw[fill=black] (0,2*\xyd) \bul -- ++\dpath -- ++\spath;
			\node at (2*\xyd+0.25cm,\xyd) {\footnotesize $1$};
		\end{scope}
	
		\begin{scope}[xshift=0*1.5cm+0*\xyd,yshift=-6*\xyd]
			\draw[fill=black] (0,2*\xyd) \bul;
			\node at (0*\xyd+0.25cm,2*\xyd) {\footnotesize $2$};
		\end{scope}
		
		\begin{scope}[xshift=1*1.5cm+0*\xyd,yshift=-6*\xyd]
			\node at (-0.25cm,2*\xyd) {\footnotesize $2$};
			\draw[fill=black] (0,2*\xyd) \bul -- ++\spath;
			\node at (1*\xyd+0.25cm,2*\xyd) {\footnotesize $2$};
		\end{scope}
	
		\begin{scope}[xshift=2*1.5cm+1*\xyd,yshift=-6*\xyd]
			\node at (-0.25cm,2*\xyd) {\footnotesize $2$};
			\draw[fill=black] (0,2*\xyd) \bul -- ++\dpath -- ++\upath;
			\node at (2*\xyd+0.25cm,2*\xyd) {\footnotesize $2$};
		\end{scope}
	\end{tikzpicture}
	\centering
	\caption{Paths associated to the basis elements \eqref{eq:basispaths} in the case $s=1$.}
	\label{fig:basispathsBMW}
\end{figure}
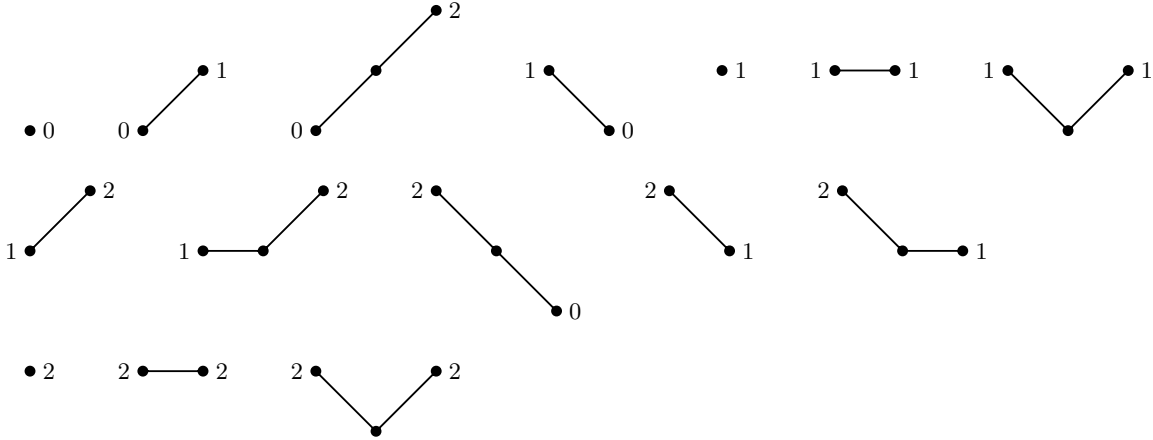
\begin{defi} {\normalfont \cite{IMO}}
	The Birman--Murakami--Wenzl algebra $BMW_3(q,\mu)$ is generated by invertible elements $s_1$ and $s_2$ with the following defining relations
	\begin{align}
		&s_1s_2s_1=s_2s_1s_2, \label{eq:BMWdef1}\\
		&e_1s_1=s_1e_1=\mu^{-1} e_1, \quad e_2s_2=s_2e_2=\mu^{-1} e_2, \label{eq:BMWdef2} \\
		&e_1s_2^{\epsilon}e_1=\mu^{\epsilon}e_1, \quad e_2s_1^{\epsilon}e_2=\mu^{\epsilon}e_2, \quad \epsilon=\pm 1, \label{eq:BMWdef3}\\
		&e_i:=1-\frac{s_i-s_i^{-1}}{q-q^{-1}}, \quad i=1,2. \label{eq:BMWdef4}
	\end{align}
\end{defi}
The centralizer $Z_1 \cong \mathcal{A}_1$ is known to be isomorphic to $BMW_3(q^2,q^4)$. One can verify using the definition \eqref{eq:eir} and the properties \eqref{eq:epropr} that the following relations hold in the algebra $\mathcal{A}_1$
\begin{equation}
	[3]_q e_i^{(0)} = 1 - \frac{q^{-4}\sigma_i-q^4 \sigma_i^{-1}}{q^2-q^{-2}}, \quad i=1,2. \label{eq:e0BMW}
\end{equation}
It is then possible to show that the following mapping from $\mathcal{A}_1$ to $BMW_3(q^2,q^4)$ extends to an algebra isomorphism:
\begin{equation}
	\sigma_i \mapsto q^4 s_i, \quad \text{for } i=1,2. \label{eq:mapBMW}
\end{equation}
Equation \eqref{eq:e0BMW} immediately implies that
\begin{equation}
	e_i^{(0)} \mapsto \frac{1}{[3]_q} e_i, \quad \text{for } i=1,2.
\end{equation}
Moreover, expressing the idempotents $e_i^{(1)},e_i^{(2)}$ in terms of $e_i^{(0)},\sigma_i$ and using \eqref{eq:gi}, one finds 
\begin{equation}
	g_i \mapsto (q+\qi)(q-\qi)^2(s_i-q^{-2}e_i+q^{-1}(q + q^{-1})) + (q+\qi) \quad \text{for } i=1,2. \label{eq:gBMW}
\end{equation}
Note that the mappings \eqref{eq:gTL} and \eqref{eq:gBMW} both agree with the explicit isomorphisms obtained in \cite{CVZ2} relating quotients of the Askey--Wilson algebra with the Temperley--Lieb and Birman--Murakami--Wenzl algebras.
\section{Perspectives \label{eq:conc}}

A presentation in terms of generators and relations of the centralizer of the diagonal action of $\Usl$ in three copies of its spin $s$ irreducible representation was given in this paper.
It has been achieved by combining two well-known algebras: the braid group algebra and the Askey--Wilson one.
We focused on the case with three strands, since it is the first non-trivial case. This gives a good starting point for the general case of $n$-fold tensor products. We know that new relations will be needed for higher $n$ (see for example \cite{CGPV,LZ}) to obtain the centralizer. Nevertheless, one can take our relations and define an algebra on $n$ strands by plugging them on all triples of adjacent strands. This algebra will be larger than the centralizer, but one can still ask whether it is finite-dimensional and study its representation theory for example.
What seems natural for finding the new relations for general $n$ is to bring in the higher rank Askey--Wilson algebra introduced in \cite{PW,BCV,Dec}. In these higher rank cases, the definition of the special quotient of the Askey--Wilson algebra
has not been given yet and deserves more investigation (see \cite{CL} though for $n=4$). 

One may also wonder if results similar to those proven in this paper hold when $q$ is a root of unity. It is well-known that the representation theory of $\Usl$ is more complicated in this case and hence the study of the centralizer should be more involved. For instance, it is known that for $q$ a root of unity there are some degeneracies in the characteristic 
polynomial equation \eqref{eq:chareqsig} for $\sigma_i$ which makes the definition of the idempotents more complicated.
The limits $q\to \pm 1$ are also of interest. Indeed, the case $q\to 1$ would correspond to replacing the Askey--Wilson algebra by the Racah algebra. 
In \cite{CPV}, a quotient of the Racah algebra has been conjectured to be isomorphic to the centralizer of the diagonal action of $U(\gsl_2)$ in any three irreducible representations.
As for $q \to -1$, a similar result has been obtained with the Bannai--Ito algebra and the centralizer of $\osp(1|2)$ in \cite{CVZ,CFV}. The use of the permutation group may simplify the presentations given in these previous papers. We hope to return to some of these questions in the near future.

\paragraph{Acknowledgements.}
NC and LPA thank the CRM for its hospitality and are supported by the international research project AAPT of the CNRS and the ANR Project AHA ANR-18-CE40-0001. The research of LV is supported by a Discovery Grant from the Natural Sciences and Engineering Research Council (NSERC) of Canada. MZ holds an Alexander--Graham--Bell graduate scholarship from NSERC.

\appendix

\section{Proof of Proposition \ref{prop:charkape}}\label{app}
This appendix provides the remaining part of the proof of Proposition \ref{prop:charkape}.

Recall that for some fixed integer $a$ such that $\s<a\leq 2\s$  
\begin{equation}
	\chi_a q_a e_1^{(a)} \sigma_2 e_1^{(a)}-\sum_{r=a,a\pm 1} q_r e_1^{(a)} g_2 e_1^{(r)} \sigma_2 e_1^{(a)} = Q^{(a)}(\kappa) e_1^{(a)}, \label{eq:polkap1}
\end{equation}
where $Q^{(a)}(\kappa)$ is a polynomial in $\kappa$ of degree at most $2\s+1$ that can be computed using the path elements $\x_\gamma$ defined in \eqref{eq:Egam}, with $\gamma \in \Gamma^{(a,a)}$, and using relations \eqref{eq:re2}--\eqref{eq:re4}. The goal is to show that $Q^{(a)}(\kappa)$ does not identically vanish. To do so, it is sufficient to verify that it takes a non-zero value for some well-chosen values of $\kappa$ and $q$ that will simplify the computations.

Note the following useful identity:
\begin{equation}
	\chi_a \pm \chi_b = (q^{a-b} \pm q^{b-a})(q^{a+b+1} \pm q^{-a-b-1}). \label{eq:idchi}
\end{equation}
We start by evaluating $Q^{(a)}(\kappa)$ at
\begin{equation}
	\kappa = \chi_{\s-a}.
\end{equation} 
For any integer $0 \leq b \leq 2\s$, 
\begin{align}
	&\tau_b(\chi_{\s-a})= \frac{\chi_\s(q^{a} + q^{-a})(q^{2\s+1-a} + q^{-2\s-1+a})}{(q^{b} + q^{-b})(q^{b+1} + q^{-b-1})}, \label{eq:tau_eval_kap} \\
	&F_b(\chi_{\s-a}) = \frac{[2\s+1-b]_q[2\s+1+b]_q}{(q^b+q^{-b})^2[2b-1]_q[2b+1]_q}(\chi_{\s-a}-\chi_{\s-b})(\chi_{\s+b}-\chi_{\s-a}). \label{eq:F_eval_kap}
\end{align}
In particular, $F_a(\chi_{\s-a}) = 0$. Consequently, any term in $Q^{(a)}(\chi_{\s-a})$ which is associated to a path $\gamma \in \Gamma^{(a,a)}$ that goes below $a$ vanishes since it is proportional to $F_a(\chi_{\s-a})$, as can be seen diagrammatically from \eqref{eq:creux}. We can hence only consider the functions $\tau_b(\chi_{\s-a})$ and $F_{b+1}(\chi_{\s-a})$ with $b\geq a$. 

We now want to consider the limit
\begin{equation}
	q \to \exp\left( {\frac{2\pi i }{4\s+4}} \right). \label{eq:qlim}
\end{equation}
We must verify that the functions $\tau_b(\chi_{\s-a})$ and $F_{b+1}(\chi_{\s-a})$ with $s<a \leq b\leq 2s$ are non-singular in this limit. 
The factor $(q^b + q^{-b})$, which is present in the denominator of the function $\tau_b(\chi_{\s-a})$, vanishes in this limit only if $b=s+1$. This situation can only occur if $b=a=s+1$. It is seen however that before taking the limit \eqref{eq:qlim}, the factor $(q^b + q^{-b})$ simplifies in \eqref{eq:tau_eval_kap} when $b=a$, giving
\begin{align}
	\tau_a(\chi_{\s-a})= \frac{\chi_\s(q^{2\s+1-a} + q^{-2\s-1+a})}{q^{a+1} + q^{-a-1}}.
\end{align}
The function $F_{b+1}(\chi_{\s-a})$ contains factors $(q^{b+1} + q^{-b-1})$ in the denominator which can never vanish in the limit \eqref{eq:qlim} for $b\geq a >s$. There is also a factor $[2b+1]_q$ in the denominator of $F_{b+1}(\chi_{\s-a})$ that can vanish only if $b=s+1/2$. This can occur only if $b=a=s+1/2$. Once again, it is seen from \eqref{eq:F_eval_kap} that the problematic factor simplifies before the limit \eqref{eq:qlim} is taken, giving
\begin{align}
	F_{a+1}(\chi_{\s-a}) = \frac{[2\s-a]_q[2\s+2+a]_q}{(q^{a+1}+q^{-a-1})^2[2a+3]_q}(q - q^{-1})^2(q^{2\s-2a} - q^{2a-2\s})(q^{2\s+2} - q^{-2\s-2}). \label{eq:F_eval_kap2}
\end{align}
Finally, there is a factor $[2b+3]_q$ in the denominator of $F_{b+1}(\chi_{\s-a})$ that never vanishes when $b>s$. Therefore, we can apply the limit \eqref{eq:qlim} to the functions $\tau_b(\chi_{\s-a})$ and $F_{b+1}(\chi_{\s-a})$. 

Recall that
\begin{equation}
	\sigma_2 = \sum_{r=0}^{2\s} q_r e_2^{(r)}
\end{equation}
and
\begin{equation}
	e_2^{(r)}=\prod_{\genfrac{}{}{0pt}{}{p=0}{p\neq r}}^{2\s} \frac{g_2-\chi_p}{\chi_r-\chi_p}. \label{eq:e2g2}
\end{equation}
In the limit \eqref{eq:qlim}, the factors $\chi_r-\chi_p$ vanish only if $p=r$ or $p=2s+1-r$, as can be seen using the identity \eqref{eq:idchi}. Therefore, the idempotent $e_2^{(0)}$ written as in \eqref{eq:e2g2} is always non-singular in this limit. For $0 \leq r < 2\s$, the idempotent $e_2^{(r+1)}$ has a factor $\chi_{r+1}-\chi_{2\s-r}$ in the denominator that vanishes, except when $s$ is half-integer and $r=s-1/2$. Note also that $q_{2\s-r}=q_{r+1}$ in the limit \eqref{eq:qlim}. If $\s$ is an integer, one can write
\begin{align}
	\sigma_2 
	&= e_2^{(0)} + \sum_{r=0}^{s-1}\left(q_{r+1} e_2^{(r+1)} + q_{2\s-r} e_2^{(2\s-r)}\right)\\
	&= e_2^{(0)} + \sum_{r=0}^{s-1}q_{r+1} \left( e_2^{(r+1)} + e_2^{(2\s-r)}\right) + \sum_{r=0}^{s-1}(q_{2\s-r}-q_{r+1}) e_2^{(2\s-r)}. \label{eq:sig2lim1}
\end{align}
Similarly, if $\s$ is half-integer one has
\begin{align}
	\sigma_2 
	&= e_2^{(0)} + q_{s+\frac{1}{2}}e_2^{(s+\frac{1}{2})}  + \sum_{r=0}^{s-\frac{3}{2}}q_{r+1} \left( e_2^{(r+1)} + e_2^{(2\s-r)}\right) + \sum_{r=0}^{s-\frac{3}{2}}(q_{2\s-r}-q_{r+1}) e_2^{(2\s-r)}. \label{eq:sig2lim2}
\end{align}

For some fixed integer $r$ such that $0 \leq r < 2\s$ and $r\neq s-\frac{1}{2}$, we define the polynomial 
\begin{equation}
	P(X) := \prod_{\genfrac{}{}{0pt}{}{p=0}{p\neq r+1,2\s-r}}^{2\s} (X-\chi_p). \label{eq:PX}
\end{equation}
Using \eqref{eq:e2g2} and \eqref{eq:PX}, one can write
\begin{align}
	e_2^{(r+1)} + e_2^{(2\s-r)} 
	&= \frac{-P(g_2)}{P(\chi_{r+1})P(\chi_{2\s-r})}\left( \frac{(g_2-\chi_{r+1})P(\chi_{r+1})-(g_2-\chi_{2\s-r})P(\chi_{2\s-r})}{\chi_{r+1}-\chi_{2\s-r}} \right). 
\end{align}
The prefactor is defined in the limit \eqref{eq:qlim}. We now expand the parenthesis in $(\chi_{r+1}-\chi_{2\s-r})$ and up to terms proportional to $(\chi_{r+1}-\chi_{2\s-r})$, we find:
\begin{equation}
	e_2^{(r+1)} + e_2^{(2\s-r)}= \frac{-P(g_2)}{P(\chi_{r+1})P(\chi_{2\s-r})}\left( (g_2-\chi_{2\s-r})P'(\chi_{2\s-r}) - P(\chi_{2\s-r}) + \dots \right), \label{eq:eterm1}
\end{equation}
which has no singularities in the limit \eqref{eq:qlim}.

The remaining term to be considered is
\begin{equation}
	 (q_{2\s-r}-q_{r+1})e_2^{(2\s-r)} = \frac{(q_{2\s-r}-q_{r+1})}{(\chi_{2\s-r}-\chi_{r+1})}\frac{P(g_2)}{P(\chi_{2\s-r})}(g_2-\chi_{r+1}). \label{eq:eterm2}
\end{equation}
Using the identity \eqref{eq:idchi}, one can write
\begin{equation}
	(\chi_{2\s-r}-\chi_{r+1}) = (q^{2\s-2r-1}-q^{-2\s+2r+1})(q^{2\s+2}-q^{-2\s-2}).
\end{equation}
This factor which appears in the denominator of \eqref{eq:eterm2} has only a simple root at the value of the limit \eqref{eq:qlim} since we have supposed $r\neq s-\frac{1}{2}$. The factor $(q_{2\s-r}-q_{r+1})$ vanishes in this limit and hence also has a root at this value of $q$. It follows that the term \eqref{eq:eterm2} is not singular.

The previous discussion implies that it is justified to express $\sigma_2$ as in \eqref{eq:sig2lim1} or \eqref{eq:sig2lim2} and then take the limit \eqref{eq:qlim}. One finds from \eqref{eq:F_eval_kap2} that $F_{a+1}(\chi_{\s-a})\to 0$ in this limit. It follows that any term in the polynomial $Q^{(a)}(\chi_{\s-a})$ that is associated to a path $\gamma \in \Gamma^{(a,a)}$ that goes above $a$ vanishes, as can be seen from \eqref{eq:pic}. Hence, we need only consider the terms corresponding to the constant paths in $\Gamma^{(a,a)}$. One can also verify that $\tau_a(\chi_{\s-a}) \to \chi_0$ in the limit \eqref{eq:qlim}. The LHS of \eqref{eq:polkap1} thus becomes
\begin{equation}
	q_a(\chi_a - \chi_0) e_1^{(a)} \sigma_2 e_1^{(a)}, \label{eq:polkap2}
\end{equation} 
where $\sigma_2$ is written as in \eqref{eq:sig2lim1} or \eqref{eq:sig2lim2}, and $g_2$ is replaced by $\chi_0$. Since $P(\chi_0)=0$, it is seen from \eqref{eq:eterm1}, \eqref{eq:eterm2} and \eqref{eq:e2g2} that all the terms in \eqref{eq:sig2lim1} or \eqref{eq:sig2lim2} vanish except for $e_1^{(0)}$ which becomes $1$. Hence we can replace $\sigma_2 $ by $1$ in \eqref{eq:polkap2}. We finally deduce that 
\begin{equation}
	Q^{(a)}(\chi_{\s-a}) \to q_a(\chi_a - \chi_0),
\end{equation}
which does not vanish since $0 \leq s<a\leq 2s$. We conclude that $Q^{(a)}(\kappa)$ is not identically zero.

\end{document}